\newtheorem{theorem}{Theorem}[section]
\newtheorem{lemma}[theorem]{Lemma}
\newtheorem{proposition}[theorem]{Proposition}
\newtheorem{corollary}[theorem]{Corollary}
\theoremstyle{remark}
\newtheorem{remark}[theorem]{Remark}
\newtheorem*{remark*}{Remark}
\theoremstyle{definition}
\DeclarePairedDelimiter\abs{\lvert}{\rvert}%
\newcommand{\R}{\mathbb{R}}
\newcommand{\Rp}{\mathbb{R}^+}
\newcommand{\Eps}{\mathcal{E}}
\newcommand{\A}{\mathcal{A}}
\newcommand{\F}{\mathcal{F}}
\renewcommand{\leq}{\leqslant}
\renewcommand{\geq}{\geqslant}
\newcommand{\la}{\lambda}
\newcommand{\de}{\partial}
\newcommand{\sech}{\text{sech}}
\newcommand{\f}[2]{\frac{#1}{#2}}
\newcommand{\deb}{\rightharpoonup}
\title[NLS ground states on the half--line with point interactions]{NLS ground states on the half-line with point interactions} 
\author[F. Boni]{Filippo Boni$^{*}$}
\address{Università degli Studi di Napoli Federico II, Dipartimento di Matematica e Applicazioni ``Renato Caccioppoli”, Via Cintia, Monte S. Angelo, 80126, Napoli, Italy}
\thanks{$^{*}$ Corresponding author}
\email{filippo.boni@unina.it}
\author[R. Carlone]{Raffaele Carlone}
\address{Università degli Studi di Napoli Federico II,Dipartimento di Matematica e Applicazioni ``Renato Caccioppoli”, Via Cintia, Monte S. Angelo, 80126, Napoli, Italy}
\email{raffaele.carlone@unina.it}
\date{\today}
\begin{document}


\begin{abstract}
We investigate the existence and the uniqueness of NLS ground states of fixed mass on the half--line in the presence of a point interaction at the origin. The nonlinearity is of power type, and the regime is either $L^{2}$-subcritical or $L^{2}$-critical, while the point interaction is either attractive or repulsive. 
In the $L^{2}$-subcritical case, we prove that ground states exist for every  mass  value if the interaction is attractive, while ground states exist only for sufficiently large masses if the interaction is repulsive. In the latter case, if the power is less or equal to four, ground states coincide with the only bound state. If instead, the power is greater than four, then there are values of the mass for which two bound states exist, and neither of the two is a ground state, and values of the mass for which two bound states exist, and one of them is a ground state.
In the $L^{2}$-critical case, we prove that ground states exist for masses strictly below a critical mass value  in the attractive case, while ground states never exist in the repulsive case.
 
\end{abstract}

\maketitle

\vspace{-.5cm}
\noindent {\footnotesize \textul{AMS Subject Classification:} 35Q40, 35Q55, 35B07, 35B09, 35R99, 49J40, 49N15.}

\noindent {\footnotesize \textul{Keywords:} standing waves, nonlinear Schr\"odinger, ground states, delta interaction.}



\section{Introduction}

In this paper, we investigate the existence and the uniqueness of ground states of the energy
\begin{equation}
\label{FR+}
F(u)=\f{1}{2}\int_0^{+\infty}|u'|^{2}\,dx-\f{1}{p}\int_0^{+\infty}|u|^{p}\,dx+\f{\alpha}{2}|u(0)|^{2}
\end{equation}
 among functions belonging to
\begin{equation}
\label{mass-const}
H^{1}_{\mu}(\R^{+}):=\left\{v\in H^{1}(\R^{+})\,:\,\int_{0}^{+\infty}|u|^{2}=\mu\right\},\quad \mu>0.
\end{equation}

\noindent We denote  with
\begin{equation}
\label{GS-lev}
\F(\mu):=\inf_{u\in H^{1}_\mu(\R^{+})} F(u)
\end{equation}
the ground state energy level and, accordingly, a \emph{ground state} $u$ of \eqref{FR+} at mass $\mu$ is defined as a global minimizer of \eqref{FR+} in the space \eqref{mass-const}, namely $u\in H^1_\mu(\Rp)$ such that $F(u)=\F(\mu)$.

In the following, the power $p$ of the nonlinear term will belong to the interval $(2,6]$, including both the $L^{2}$-subcritical case $2<p<6$ and the $L^{2}$-critical case $p=6$, and $\alpha$ will be negative or positive, corresponding to an attractive or repulsive point interaction respectively. The minimization is carried out among real-valued and positive functions. This is not restrictive since $F(|u|)\leq F(u)$ and any ground state is real-valued and positive up to a multiplication by a constant phase $e^{i\theta}$. 

By standard variational arguments, it turns out that ground states of \eqref{FR+} satisfy
\begin{equation*}
-u''-|u|^{p-2}u+\omega u+\alpha\delta_{0}u=0,
\end{equation*} 
where $\delta_0$ denotes the delta distribution at the point $0$, or, written in an  equivalent form,
\begin{equation}
\label{EL-delta-hl}
\begin{cases}
-u''-|u|^{p-2}u+\omega u=0\quad\text{on}\quad \R^{+},\\
u'(0)=\alpha u(0),
\end{cases}
\end{equation}
for some $\omega>0$. In the following, we will call \emph{bound states} all the real-valued solutions of \eqref{EL-delta-hl} belonging to $H^{1}(\R^{+})$. 

Moreover, given a solution $u$ of \eqref{EL-delta-hl}, then the associated standing wave $\psi(t,x):=e^{i\omega t}u(x)$ is a solution of the Nonlinear Schr\"odinger equation
\begin{equation*}
i\partial_{t}\psi=H_{\alpha}\psi-|\psi|^{p-2}\psi,
\end{equation*}
where $H_{\alpha}:D(H_{\alpha})\subset L^{2}(\Rp)\to L^{2}(\Rp)$ is a self-adjoint extension of $-\f{d^{2}}{dx^{2}}:C^{\infty}_{c}(\Rp)\to L^{2}(\Rp)$. Let us recall that all these self-adjoint extensions are parametrized by $\alpha\in \R\cup\{\infty\}$: in particular, the case $\alpha=0$ corresponds to the Laplacian operator with Neumann condition at the origin, while the case $\alpha=\infty$ has to be intended as the Laplacian operator with Dirichlet boundary condition. The issues under investigation in this paper involve the operator $H_{\alpha}$ with $\alpha\in \R\setminus\{0\}$, corresponding to the Laplacian operator with Robin or delta condition at the origin. { Let us underline that, when $\alpha>0$, the whole spectrum of $H_{\alpha}$ is continuous and coincides with $\sigma(H_{\alpha})=[0,+\infty)$, while, when $\alpha<0$, we have that
\begin{equation*}
\sigma(H_{\alpha})=\{-\alpha^{2}\}\cup[0,+\infty),
\end{equation*}
with $-\alpha^{2}$ the sole negative eigenvalue of $H_{\alpha}$.}

{Although our interest is in the mathematical treatment, the NLS in low dimensional manifolds has applications to the physical context. 

One of the greatest successes of the nonlinear Schr\"odinger equation is the widespread application as a model in different areas of physics, ranging from the propagation of laser beams \cite{PP} to the theory of Bose--Einstein condensates \cite{DPS} and from the signal transmission in a neuronal network \cite{CM} to fluid dynamics \cite{L}. In  some of these contexts the metric graph model on which the dynamics of the NLS is set has provided an interesting modeling.

In this context the results of this paper can be interpreted as a model where the point interaction at the origin can represent inhomogeneities of various types  in the medium and affect the dynamics itself: it is widely accepted that such defects can be mathematically introduced by means of the theory of self-adjoint extensions up to the three-dimensional case (see the book \cite{AGHH-88} for a complete discussion of this topic).}

From the mathematical point of view, {the NLSE on the half-line has been the object of several papers dealing with well--posedeness, regularity and scattering issues (see \cite{ET-17, FHM-17, FIS-05,W-05}).

More specifically,} nonlinear models with point interactions have been studied first in dimension one and only more recently in dimensions two and three. In particular, on the real line, different point interactions have been considered, including delta conditions \cite{CM-95,FJ,FOO,HMZ}, delta prime conditions \cite{ANV} and more exotic conditions such as F\"ul\"op--Tsustui type conditions \cite{FT}. In the two and three dimensional context, recent papers have addressed nonlinear problems in the presence of a point interaction, focusing both on ground states and their stability \cite{ABCT-3d, ABCT,FGI-21,GMS}  and on global well--posedness and blow--up phenomena \cite{CFN,FN}. Moreover, in  recent years such problems have been considered also on metric graphs, in presence of both linear point interactions \cite{ACFN-14, ACFN-12, ACFN-16, ACFN-JDE,SBMNU,SMSN} and nonlinear ones \cite{ABD-20,BD2,BD} (see \cite{ABR} for a review of these results).

The present paper fits in this line of research, addressing the problem of fixed-mass ground states of the NLSE on the half--line $\R^{+}$, both in the attractive and the repulsive case.

\subsection{Main results}

Let us present here the main results of the paper.  Let us point out that, in view of Proposition \ref{compactth} and Corollary \ref{compactcor}, the existence of ground states at mass $\mu$ can be reduced to the problem of comparing the ground state energy level \eqref{GS-lev} with the ground state energy level of the "problem at infinity", as defined in \cite{L-ANIHPC84}. In our case, it corresponds to the standard NLS energy on the line
\begin{equation}
\label{en-sol}
\frac{1}{2}\int_\R\abs{u'(x)}^2\,dx-\frac{1}{p}\int_\R\abs{u(x)}^p\,dx,    
\end{equation}
whose ground states at mass $\mu$ are the soliton $\phi_{\mu}$, defined in \eqref{phi-mu}, and its translations.

The first theorem investigates the subcritical case $2<p<6$ in presence of an attractive point interaction, i.e. $\alpha<0$. 

\begin{theorem}[Ground states for $2<p<6$ and $\alpha<0$]
\label{sigmaneg-p<6}
Let $2<p<6$ and $\alpha<0$. Then for every $\mu>0$ there exists a unique positive ground state of \eqref{FR+} at mass $\mu$. 
\end{theorem}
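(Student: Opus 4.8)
The plan is to establish existence by showing the strict inequality $\F(\mu) < \mathcal{F}_\R(\mu)$, where $\mathcal{F}_\R(\mu)$ denotes the ground state level of the problem at infinity \eqref{en-sol}, so that Proposition \ref{compactth} and Corollary \ref{compactcor} guarantee the precompactness of minimizing sequences (up to translations, which are ruled out by the strict gap) and hence the existence of a minimizer. To get this strict inequality, I would use the half-soliton as a competitor: if $\phi_\mu$ is the soliton on $\R$ at mass $\mu$ centered so that its maximum sits at $x=0$, then its restriction to $\R^+$ has mass $\mu/2$; rescaling it to an admissible function of mass $\mu$ on $\R^+$ and computing the effect of the boundary term $\frac{\alpha}{2}|u(0)|^2$ — which is strictly negative since $\alpha<0$ and $u(0)>0$ — shows that one can beat $\mathcal{F}_\R(\mu)$. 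One must check that the kinetic and potential contributions of this competitor, after the mass rescaling, do not overwhelm the gain from the negative boundary term; in the subcritical regime the scaling exponents work in one's favor, and a short computation (or an explicit choice of a slightly perturbed profile) yields $F(u) < \mathcal{F}_\R(\mu)$.

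Once existence is secured, any ground state $u$ is, after replacing it by $|u|$ and a phase, real and nonnegative, and satisfies the Euler--Lagrange system \eqref{EL-delta-hl} for some $\omega>0$; by the strong maximum principle $u>0$ on $\R^+$, and since $u\in H^1(\R^+)$ solves $-u''+\omega u = |u|^{p-2}u$ on the half-line with $u(x)\to 0$ as $x\to+\infty$, $u$ must be a tail of the soliton: explicitly $u(x) = \phi_\omega(x + x_0)$ for the appropriate shift $x_0$ determined by the boundary condition $u'(0)=\alpha u(0)$. Since $\alpha<0$ forces $u'(0)/u(0)<0$, and $\phi_\omega'(y)/\phi_\omega(y)<0$ precisely for $y>0$ (the soliton is even with a strict maximum at $0$), there is for each $\omega$ a \emph{unique} such shift $x_0=x_0(\omega)>0$; thus bound states form a one-parameter family indexed by $\omega$, and the mass $\mu(\omega) = \int_0^\infty \phi_\omega^2(x+x_0(\omega))\,dx$ is a continuous function of $\omega$.

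It then remains to prove that for each $\mu>0$ this family contains exactly one ground state. For uniqueness I would show $\omega \mapsto \mu(\omega)$ is strictly monotone (increasing) on its range and takes every value in $(0,+\infty)$; combined with the fact that every ground state lies in this family, strict monotonicity pins down $\omega$, and hence $u$, uniquely. The monotonicity can be obtained either by an explicit change of variables exploiting the scaling $\phi_\omega(x) = \omega^{1/(p-2)}\phi_1(\sqrt{\omega}\,x)$ — which reduces $\mu(\omega)$ to $\omega^{(6-p)/(2(p-2))}$ times a function of the rescaled shift $\sqrt\omega\, x_0$ that one analyzes directly — or, more robustly, by a Nehari/energy argument showing that along the bound-state branch the map $\omega\mapsto F(u_\omega)$ is concave while $\mu(\omega)$ is monotone, so that the constrained minimizer is selected uniquely.

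The main obstacle I anticipate is the strict energy inequality $\F(\mu)<\mathcal{F}_\R(\mu)$: one must be careful that the naive half-soliton competitor, once rescaled to the correct mass, genuinely has energy below the line level rather than merely equal to it, and this requires tracking the boundary-term gain against the rescaling cost; the subcritical restriction $p<6$ is exactly what makes this work, so the computation must use it honestly. The uniqueness part, by contrast, is comparatively mechanical once the bound-state classification and the monotonicity of $\mu(\omega)$ are in hand, though verifying strict monotonicity for the full range $p\in(2,6)$ may itself demand a small but non-trivial analysis of $x_0(\omega)$.
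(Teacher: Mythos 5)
Your proposal is correct and follows essentially the same route as the paper: existence is obtained from the strict inequality $\F(\mu)<\Eps(\mu,\R)$ tested on the half-soliton competitor together with Proposition \ref{compactth} and Corollary \ref{compactcor}, and uniqueness from the classification of bound states as shifted solitons plus the strict monotonicity and surjectivity onto $(0,+\infty)$ of the mass map $\omega\mapsto M(\omega)$, which is exactly Proposition \ref{mu-study}$(i)$ and Corollary \ref{station-numb}$(i)$. The only cosmetic difference is that the paper takes $u=\phi_{2\mu}\mathbb{1}_{\R^{+}}$ directly, so that $F(u)=\Eps(\mu,\R^{+})+\tfrac{\alpha}{2}|u(0)|^{2}<\Eps(\mu,\R^{+})<\Eps(\mu,\R)$ with no mass-rescaling cost to balance, making the ``gain versus cost'' check you worry about unnecessary.
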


{Ground states for the attractive NLSE in presence of an attractive point interaction have been studied on star graphs in \cite{ACFN-14} and on more general metric graphs and with more general potential terms in \cite{C-18,CFN-17}. In particular, the result of Theorem \ref{sigmaneg-p<6} has been proved in \cite[Theorem 1]{ACFN-14} for star graphs with at least three half--lines, but the same proof can be repeated for the half--line, intended as a star graph with only one half--line.}
As one may expect, the existence of ground states holds for every value of the mass. This is natural since the presence of an attractive delta interaction has the effect to lower the energy of half of the soliton of double mass, that is the ground state of the standard NLS energy on $\R^{+}$ (see Section \ref{subRR+})

The next two theorems deal instead with the subcritical case $2<p<6$ in presence of a repulsive point interaction, i.e. $\alpha>0$, unravelling different phenomena with respect to the same problem on $\R$.

\begin{theorem}[Ground states for $2<p\leq 4$ and $\alpha>0$]
\label{sigmapos-pleq4}
Let $2<p\leq 4$ and $\alpha>0$. Then ground states of \eqref{FR+} at mass $\mu$ exist if and only if $\mu>\|\phi_{\alpha^{2}}\|_{L^{2}(\R)}^{2}$, where $\phi_{\alpha^{2}}$ is the soliton at frequency $\alpha^{2}$. Moreover, whenever they exist, the positive ground states are unique and coincide with the only positive bound state of mass $\mu$.
\end{theorem}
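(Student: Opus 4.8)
The plan is to first solve the ODE \eqref{EL-delta-hl} explicitly on $\R^+$ and classify all positive bound states, and then use the compactness/energy comparison machinery (Proposition \ref{compactth}, Corollary \ref{compactcor}) to decide when the infimum $\F(\mu)$ is attained. On the half-line, any $H^1$ solution of $-u''-u^{p-1}+\omega u=0$ that decays at $+\infty$ must be a right-translate of the full-line soliton $\phi_\omega$; write $u(x)=\phi_\omega(x+y)$ for some $y\in\R$ (with $\phi_\omega$ even, peaked at $0$). The Robin condition $u'(0)=\alpha u(0)$ with $\alpha>0$ then forces $\phi_\omega'(y)/\phi_\omega(y)=\alpha>0$, i.e. $y<0$, so $u$ is a \emph{strictly decreasing} piece of the soliton, lying on the left of the peak. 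Using the explicit profile of $\phi_\omega$ one checks that for each $\omega$ there is exactly one such $y=y(\omega)<0$, hence exactly one positive bound state at frequency $\omega$. The key scaling computation is then to express the mass $\mu(\omega)=\|u\|_{L^2(\R^+)}^2$ as a function of $\omega$: I expect this to be strictly increasing (for $2<p\le4$), with $\mu(\omega)\to\|\phi_{\alpha^2}\|_{L^2(\R)}^2$ as $\omega\downarrow\alpha^2$ — the threshold frequency being exactly where $y\to-\infty$, i.e. where the bound state degenerates to (a translate of) the whole soliton — and $\mu(\omega)\to+\infty$ as $\omega\to+\infty$. This already shows that a positive bound state of mass $\mu$ exists if and only if $\mu>\|\phi_{\alpha^2}\|_{L^2(\R)}^2$, and that it is then unique.

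Next I would identify the problem at infinity. By Corollary \ref{compactcor}, ground states at mass $\mu$ exist precisely when the strict inequality $\F(\mu)<\mathcal F_\infty(\mu)$ holds, where $\mathcal F_\infty(\mu)$ is the ground state level of \eqref{en-sol}, attained by the soliton $\phi_\mu$; and when it fails one has $\F(\mu)=\mathcal F_\infty(\mu)$ with the infimum not attained. Since any competitor on $\R^+$ can be reflected/translated into $\R$, one always has $\F(\mu)\ge\tfrac12 \mathcal F_\infty(2\mu)$-type bounds, but the cleaner route is: if a positive bound state $u$ of mass $\mu$ exists, it is a critical point of $F$ on $H^1_\mu(\R^+)$, and by the monotonicity/structure of the constrained functional in the subcritical regime it must in fact be the minimizer, so $\F(\mu)\le F(u)<\mathcal F_\infty(\mu)$ — the last strict inequality because truncating the soliton and bending it to meet the Robin condition with $\alpha>0$ strictly raises the energy relative to the free soliton only when... here is the subtlety. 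Conversely, if $\mu\le\|\phi_{\alpha^2}\|_{L^2(\R)}^2$ there is no bound state at all, so no minimizer can exist, and by Corollary \ref{compactcor} we must be in the non-compact case $\F(\mu)=\mathcal F_\infty(\mu)$, not attained.

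The main obstacle, and the place where the hypothesis $p\le4$ is used, is proving the energy comparison $F(u)<\mathcal F_\infty(\mu)=F(\phi_\mu|_{\R^+}\text{-type competitor})$ precisely when $\mu$ exceeds the threshold. The natural strategy is to compute $F$ along the one-parameter family of bound states, i.e. to study $g(\omega):=F(u_\omega)$ where $u_\omega$ is the bound state of frequency $\omega$, using the Pohozaev/Nehari identities (multiply \eqref{EL-delta-hl} by $u$ and by $x u'$, integrate on $\R^+$, and collect the boundary terms at $0$, which are where $\alpha$ enters). This reduces everything to showing a sign for a single function of $\omega$ built from $\int_0^\infty u_\omega^2$, $\int_0^\infty (u_\omega')^2$, $\int_0^\infty u_\omega^p$ and the boundary value $u_\omega(0)$; the constraint $p\le4$ should be exactly what makes the relevant combination have the right monotonicity (for $p>4$ it fails, which is consistent with the richer picture announced in the $p>4$ theorem). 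I would also need the uniqueness statement "the ground state coincides with the unique positive bound state": this follows once we know (i) there is a ground state, (ii) it solves \eqref{EL-delta-hl} with some $\omega>0$ and is positive, and (iii) by the classification above the positive bound state of mass $\mu$ is unique — so the ground state has no choice but to be that one. The delicate point throughout is handling the non-smooth boundary contribution $\tfrac\alpha2 u(0)^2$ carefully in all the integral identities; everything else is the explicit soliton calculus.
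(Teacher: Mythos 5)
Your overall architecture matches the paper's: classify the bound states as shifted solitons (Proposition \ref{stat-fix-om}), show the mass map $\omega\mapsto M(\omega)$ is strictly increasing onto $(\|\phi_{\alpha^2}\|_{L^2(\R)}^2,+\infty)$ when $2<p\le 4$ and $\alpha>0$ (Proposition \ref{mu-study}$(ii)$), conclude non-existence for $\mu\le\|\phi_{\alpha^2}\|_{L^2(\R)}^2$ because $\A_\mu=\emptyset$, and reduce existence for larger $\mu$ to the comparison $F(\eta^\mu)\le\Eps(\mu,\R)$ via Corollary \ref{compactcor}. The uniqueness argument (a ground state must be a bound state, and there is only one) is also the paper's.

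However, there is a genuine gap at exactly the point you flag as ``the subtlety'': you never actually establish the inequality $F(\eta^\mu)\le\Eps(\mu,\R)$ for $\mu>\|\phi_{\alpha^2}\|_{L^2(\R)}^2$. The assertion that a bound state ``must in fact be the minimizer by the monotonicity/structure of the constrained functional'' is circular — being a critical point does not give $F(\eta^\mu)<\Eps(\mu,\R)$, and $\F(\mu)\le F(\eta^\mu)$ is trivial but useless here. The paper closes this gap by studying the \emph{normalized} energy $K(\mu):=F(\eta^{\omega(\mu)})/\mu^{2\beta+1}$ and comparing it with the constant $-\theta_p=\Eps(\mu,\R)/\mu^{2\beta+1}$: two ingredients are needed, namely (a) the boundary limit $K(\mu)\to-\theta_p$ as $\mu\downarrow\|\phi_{\alpha^2}\|_{L^2(\R)}^2$, coming from the degeneration you correctly describe ($a\to+\infty$, $\eta^\mu(0)\to 0$, so the bound state becomes a full soliton of the threshold mass), and (b) the explicit computation $K'(\mu)=-\alpha\tfrac{p-2}{6-p}|\eta^\mu(0)|^2\mu^{-2\beta-2}<0$ obtained by combining \eqref{mu prime}, \eqref{F-omega} and \eqref{dF domega}. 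Monotonicity of $F(\eta^\omega)$ alone, which is what your Pohozaev/Nehari sketch aims at, would not suffice without the normalization and the boundary limit. Note also that your guess about where $p\le4$ enters is slightly off: the sign $K'<0$ holds for all $2<p<6$; the restriction $p\le4$ is used only to guarantee that $M(\omega)$ is monotone, i.e.\ that there is a single branch of bound states emanating from the threshold mass, which is what makes the limit in (a) available for every admissible $\mu$. For $p>4$ the mass map is non-monotone, the branch structure changes, and this is the source of the different statement in Theorem \ref{sigmapos-p>4} — not a sign change in the energy derivative.
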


\begin{theorem}[Ground states for $4<p<6$ and $\alpha>0$] 
\label{sigmapos-p>4}
Let $4<p<6$ and $\alpha>0$. Then there exists $\mu^{*}=\mu^{*}(\alpha)<\|\phi_{\alpha^{2}}\|_{L^{2}(\R)}^{2}$ such that bound states of mass $\mu$ exist if and only if $\mu\geq \mu^{*}$. In particular, two positive bound states of mass $\mu$ exist if and only if $\mu^{*}<\mu<\|\phi_{\alpha^{2}}\|_{L^{2}(\R)}^{2}$.

Moreover, there exists $\widetilde{\mu}=\widetilde{\mu}(\alpha)$ satisfying $\mu^{*}<\widetilde{\mu}<\|\phi_{\alpha^{2}}\|_{L^{2}(\R)}^{2}$ such that ground states of \eqref{FR+} at mass $\mu$ exist if and only if $\mu\geq \widetilde{\mu}$. Whenever they exist, the positive ground states are unique.
\end{theorem}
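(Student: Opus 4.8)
\emph{Strategy and set-up.} I would reduce everything to the study of the one-parameter family of bound states, using Proposition~\ref{compactth} and Corollary~\ref{compactcor}: these give $\F(\mu)\leq\mathcal{E}_\R(\mu)$ for every $\mu$, with ground states existing whenever $\F(\mu)<\mathcal{E}_\R(\mu)$, where $\mathcal{E}_\R(\mu):=\tfrac12\int_\R|\phi_\mu'|^2-\tfrac1p\int_\R|\phi_\mu|^p$ is the value of \eqref{en-sol} at the soliton $\phi_\mu$ of mass $\mu$ (the ground state level of the problem at infinity). I also use that $\mathcal{E}_\R$ is $C^1$ with $\tfrac{d}{d\mu}\mathcal{E}_\R(\mu)=-\tfrac12\omega_\mu$ ($\omega_\mu$ the frequency of $\phi_\mu$), that $\omega\mapsto\|\phi_{[\omega]}\|_{L^2(\R)}^2$ is strictly increasing for $p<6$ (where $\phi_{[\omega]}$ denotes the soliton of frequency $\omega$), and that every ground state solves \eqref{EL-delta-hl}. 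A phase-plane argument shows that any nonzero solution of \eqref{EL-delta-hl} in $H^1(\R^+)$ lies on a (positive or negative) homoclinic loop of $-u''=|u|^{p-2}u-\omega u$, hence is of one sign and, up to a sign, equals $\phi_{[\omega]}(\cdot-t)|_{\R^+}$ for some $t\in\R$. Since $(\log\phi_{[1]})'(x)=-\tanh(\tfrac{p-2}{2}x)$ and $\phi_{[\omega]}(x)=\omega^{1/(p-2)}\phi_{[1]}(\sqrt\omega\,x)$, the condition $u'(0)=\alpha u(0)$ with $\alpha>0$ becomes $\sqrt\omega\,\tanh(\tfrac{p-2}{2}\sqrt\omega\,t)=\alpha$, which has a unique solution $t=t(\omega)>0$ exactly for $\omega>\alpha^2$. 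So the positive bound states are parametrized by $\omega\in(\alpha^2,+\infty)$; write $u_\omega$ for the corresponding profile, $M(\omega):=\|u_\omega\|_{L^2(\R^+)}^2$ and $\mathcal{E}(\omega):=F(u_\omega)$, both smooth in $\omega$.

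\emph{Step 1: the mass function -- the main obstacle.} With $\tau:=\sqrt\omega\,t(\omega)\in(0,+\infty)$ (a decreasing bijection in $\omega$) and the above scaling, $M=\alpha^{2\gamma}(\tanh\tfrac{p-2}{2}\tau)^{-2\gamma}\int_{-\tau}^{+\infty}\phi_{[1]}^2$ with $\gamma:=\tfrac{6-p}{2(p-2)}>0$; hence $M(\omega)\to+\infty$ as $\omega\to+\infty$ and $M(\omega)\to\|\phi_{\alpha^2}\|_{L^2(\R)}^2$ as $\omega\to\alpha^2$, the bound state converging in the latter limit -- in mass, in energy and with vanishing boundary term -- to the whole soliton $\phi_{\alpha^2}$ on $\R$. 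Differentiating, $\sgn M'(\omega)$ is the sign of $\tfrac{6-p}{\sinh((p-2)\tau)}-\tfrac{\phi_{[1]}(\tau)^2}{\int_{-\tau}^{+\infty}\phi_{[1]}^2}$. The heart of the proof, and the only place where $4<p<6$ (as opposed to $p\leq4$) is needed, is to show that this expression changes sign exactly once on $(0,+\infty)$; granting that, $M$ strictly decreases on some $(\alpha^2,\omega_0)$ and strictly increases on $(\omega_0,+\infty)$, with $\mu^*:=M(\omega_0)<\|\phi_{\alpha^2}\|_{L^2(\R)}^2$, and inspecting the level sets of $M$ gives the bound-state part of the statement: bound states of mass $\mu$ exist iff $\mu\geq\mu^*$, and exactly two (one per monotone branch) iff $\mu^*<\mu<\|\phi_{\alpha^2}\|_{L^2(\R)}^2$. (For $2<p\leq4$ the same computation gives $M'<0$ throughout, recovering Theorem~\ref{sigmapos-pleq4}.)

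\emph{Step 2: energy along the branch and the ground-state candidate.} The Euler--Lagrange identity $F'(u_\omega)=-\omega u_\omega$ yields $\mathcal{E}'(\omega)=-\tfrac12\omega M'(\omega)$, so $\mathcal{E}$ increases on $(\alpha^2,\omega_0)$ and decreases on $(\omega_0,+\infty)$. For $\mu^*<\mu<\|\phi_{\alpha^2}\|_{L^2(\R)}^2$ the two bound states have frequencies $\omega_-<\omega_0<\omega_+$ with $M(\omega_\pm)=\mu$, and an integration by parts together with $M<\mu$ on $(\omega_-,\omega_+)$ gives
\begin{equation*}
\mathcal{E}(\omega_+)-\mathcal{E}(\omega_-)=-\tfrac12\int_{\omega_-}^{\omega_+}\bigl(\mu-M(\omega)\bigr)\,d\omega<0,
\end{equation*}
so the bound state on the increasing branch always has the smaller energy. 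Denote its frequency by $\omega_+(\mu)$ (the unique bound state when $\mu\geq\|\phi_{\alpha^2}\|_{L^2(\R)}^2$) and set $g(\mu):=\mathcal{E}(\omega_+(\mu))-\mathcal{E}_\R(\mu)$ for $\mu\geq\mu^*$. Since $u_{\omega_+(\mu)}$ is a \emph{proper} restriction of $\phi_{[\omega_+(\mu)]}$, one has $\mu=M(\omega_+(\mu))<\|\phi_{[\omega_+(\mu)]}\|_{L^2(\R)}^2$, hence $\omega_+(\mu)>\omega_\mu$; combined with $\mathcal{E}'(\omega)=-\tfrac12\omega M'(\omega)$ and $\tfrac{d}{d\mu}\mathcal{E}_\R(\mu)=-\tfrac12\omega_\mu$ this gives $g'(\mu)=\tfrac12(\omega_\mu-\omega_+(\mu))<0$, so $g$ is strictly decreasing. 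Since any ground state is a positive bound state of least energy and $\F(\mu)\leq\mathcal{E}_\R(\mu)$, a ground state of mass $\mu$ exists if and only if $g(\mu)\leq0$.

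\emph{Step 3: locating $\widetilde\mu$ and conclusion.} Consider $h(\omega):=\mathcal{E}(\omega)-\mathcal{E}_\R(M(\omega))$ on $(\alpha^2,+\infty)$; then $h'(\omega)=\tfrac12M'(\omega)\bigl(\omega_{M(\omega)}-\omega\bigr)$ with $\omega_{M(\omega)}<\omega$ (same reason as above), so $h$ is $\cap$-shaped with maximum at $\omega_0$, while $h(\omega)\to0$ as $\omega\to\alpha^2$ by Step 1; therefore $g(\mu^*)=h(\omega_0)>0$. On the other hand, as $\mu\uparrow\|\phi_{\alpha^2}\|_{L^2(\R)}^2$ the two bound states separate: $\omega_-(\mu)\to\alpha^2$ gives $h(\omega_-(\mu))\to0^+$, while $\omega_+(\mu)\to\omega_1>\omega_0$ (where $M(\omega_1)=\|\phi_{\alpha^2}\|_{L^2(\R)}^2$), so the energy gap $\mathcal{E}(\omega_-(\mu))-\mathcal{E}(\omega_+(\mu))$ converges to $\tfrac12\int_{\alpha^2}^{\omega_1}\bigl(\|\phi_{\alpha^2}\|_{L^2(\R)}^2-M(\omega)\bigr)\,d\omega>0$; since $g(\mu)=h(\omega_-(\mu))-\bigl[\mathcal{E}(\omega_-(\mu))-\mathcal{E}(\omega_+(\mu))\bigr]$, we obtain $g(\|\phi_{\alpha^2}\|_{L^2(\R)}^2)<0$. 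By continuity and strict monotonicity there is a unique $\widetilde\mu\in(\mu^*,\|\phi_{\alpha^2}\|_{L^2(\R)}^2)$ with $g(\widetilde\mu)=0$, $g>0$ on $[\mu^*,\widetilde\mu)$ and $g<0$ on $(\widetilde\mu,+\infty)$. Hence ground states exist iff $\mu\geq\widetilde\mu$, and whenever they exist they coincide with the bound state $u_{\omega_+(\mu)}$ -- the other one, when present, has strictly larger energy -- so they are unique. Everything outside Step 1 is soft; the genuine difficulty is the single-sign-change claim there, an explicit estimate on the soliton profile that holds precisely when $p>4$.
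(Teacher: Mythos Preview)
Your argument is correct and reaches the same conclusion as the paper, but the route is genuinely different in several places, and in some respects cleaner.

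\emph{Where you differ from the paper.} The paper works throughout with the scale-invariant ratio $K(\mu)=F(\eta^{\mu})/\mu^{2\beta+1}$ and compares it to the constant $-\theta_p$, proving $K'<0$ by substituting the explicit formulas \eqref{mass-eta}, \eqref{F-omega}, \eqref{dF domega} into \eqref{K prime}. You instead use the difference $g(\mu)=\mathcal{E}(\omega_+(\mu))-\mathcal{E}_\R(\mu)$ and obtain $g'(\mu)=\tfrac12(\omega_\mu-\omega_+(\mu))<0$ from the variational identity $\mathcal{E}'(\omega)=-\tfrac12\omega M'(\omega)$, bypassing any explicit energy formula. Similarly, to show the high-frequency branch has lower energy the paper computes $\tfrac{d}{d\omega}F(\eta^\omega)$ explicitly (Proposition~\ref{station-least-en}); your integration-by-parts identity $\mathcal{E}(\omega_+)-\mathcal{E}(\omega_-)=-\tfrac12\int_{\omega_-}^{\omega_+}(\mu-M)\,d\omega$ gives this for free once one knows the shape of $M$. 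Finally, to locate $\widetilde\mu$: the paper shows $K(\mu^*)>-\theta_p$ via the auxiliary ratio $K_1$ on the lower branch, and then pushes $\mu\to+\infty$ to get $K\to-2^{2\beta}\theta_p$; you achieve both endpoints via the single function $h(\omega)=\mathcal{E}(\omega)-\mathcal{E}_\R(M(\omega))$ and the energy gap between branches, obtaining $g<0$ already at $\|\phi_{\alpha^2}\|_{L^2(\R)}^2$ without the large-mass asymptotics. Both arguments rest on the same hard fact, namely the single sign change of $M'$ for $4<p<6$ (your Step~1, the paper's Proposition~\ref{mu-study}(iii)); everything downstream is indeed soft in your version, whereas the paper keeps computing with the soliton formulas.

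\emph{One slip.} Your parenthetical remark that for $2<p\leq4$ ``the same computation gives $M'<0$ throughout'' is backwards: since $M(\omega)\to+\infty$ as $\omega\to+\infty$ (which you yourself establish), one must have $M'>0$ in that regime, consistent with Proposition~\ref{mu-study}(ii) and Theorem~\ref{sigmapos-pleq4}. This does not affect your proof of the present theorem.
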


It is well known that in presence of a repulsive point interaction no ground states exist on $\R$, since every function of a given mass $\mu$ has greater energy than the ground state energy level of \eqref{en-sol}. The situation {substantially} changes passing from $\R$ to $\R^{+}$, where ground states start existing when the mass is sufficiently large, as shown in Theorem \ref{sigmapos-pleq4} and \ref{sigmapos-p>4}. 

Let us highlight that the situation is qualitatively different depending on the strength of the power $p$. If $2<p\leq 4$, then bound states and ground states start existing together and coincide for masses larger than $\|\phi_{\alpha^{2}}\|_{2}^{2}$. If instead $4<p<6$, then there are some values of the mass between $\mu^{*}$ and $\widetilde{\mu}$ such that two positive bound states exist, but neither of the two is a ground state, and other values of the mass such that two positive bound states exist and one of them is actually the ground state.

{Let us point out that when $\alpha>0$ a different behaviour between the cases $2<p\leq 4$ and $4<p<6$ has been shown also in \cite{FJ}, where the authors studied the orbital stability of the even bound states on $\R$ in presence of a repulsive delta interaction: as a consequence, by exploiting the symmetry properties of the bound states in \cite{FJ} analogous stability results hold also on $\Rp$.}

In the next proposition, we investigate how the existence of ground states depends on the strength of the interaction $\alpha$. In particular, after fixing the value of the mass, we show that ground states exist if and only if the interaction is either attractive or repulsive with strength less than a threshold: the actual threshold is different when $p\leq 4$ and $p>4$ as a consequence of Theorem \ref{sigmapos-pleq4} and Theorem \ref{sigmapos-p>4}

\begin{proposition}
\label{GSalpha}
Let $2<p<6$ and $\mu>0$. Therefore, denoted by 
\begin{equation*}
\gamma_{p}:=\left(\f{2}{p}\right)^{\f{2}{6-p}}\left(\f{p-2}{4\int_{0}^{1}(1-s^{2})^{\f{4-p}{p-2}}}\right)^{\f{p-2}{6-p}},
\end{equation*}
there results that:
\begin{itemize}
\item[$(i)$] if $p\leq 4$, then ground states of \eqref{FR+} at mass $\mu$ exist if and only if $\alpha<\gamma_{p}\mu^{\f{p-2}{6-p}}$,
\item[$(ii)$] if $p>4$, then ground states of \eqref{FR+} at mass $\mu$ exist if and only if $\alpha\leq\widetilde{h}(\mu)$, with $\widetilde{h}(\mu)>\gamma_{p}\mu^{\f{p-2}{6-p}}$.
\end{itemize}
\end{proposition}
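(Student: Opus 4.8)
The plan is to deduce the statement from Theorems \ref{sigmaneg-p<6}, \ref{sigmapos-pleq4} and \ref{sigmapos-p>4} by means of a scaling argument, together with one elementary integral that identifies the constant $\gamma_p$. Write $F_\alpha$ for the functional \eqref{FR+}, emphasising the dependence on the strength $\alpha$. For $\lambda>0$ and $u\in H^1(\Rp)$ set $u_\lambda(x):=\lambda\,u\big(\lambda^{\frac{p-2}{2}}x\big)$; a direct computation gives
\begin{equation*}
\int_0^{+\infty}\abs{u_\lambda}^2=\lambda^{\frac{6-p}{2}}\int_0^{+\infty}\abs{u}^2,\qquad F_\alpha(u_\lambda)=\lambda^{\frac{p+2}{2}}\,F_{\alpha\lambda^{-\frac{p-2}{2}}}(u),
\end{equation*}
the point being that the kinetic and the nonlinear terms acquire the \emph{same} power $\lambda^{(p+2)/2}$. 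Hence $u\mapsto u_\lambda$ is a bijection from $H^1_\mu(\Rp)$ onto $H^1_{\lambda^{(6-p)/2}\mu}(\Rp)$ which preserves positivity and multiplies $F_{\alpha\lambda^{-(p-2)/2}}$ by the positive constant $\lambda^{(p+2)/2}$, hence maps minimizers to minimizers. Writing $\sigma:=\tfrac{6-p}{p-2}>0$ and $t:=\lambda^{(p-2)/2}$, this becomes the scaling equivalence: for every $t>0$, a ground state of \eqref{FR+} at mass $\mu$ with strength $\alpha$ exists if and only if a ground state at mass $t^{\sigma}\mu$ with strength $t\alpha$ exists. Since for $\alpha\le 0$ ground states exist at every mass — by Theorem \ref{sigmaneg-p<6} when $\alpha<0$, and, in the Neumann case $\alpha=0$, by half of the soliton of double mass — and since the thresholds below are positive, it suffices to treat $\alpha>0$, where by the homogeneity it is enough to know the existence set at the reference mass $\mu=1$.

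I would then record $\gamma_p$ explicitly. The soliton at frequency $1$ is $\phi_1(x)=\big(\tfrac{p}{2}\big)^{\frac{1}{p-2}}\sech^{\frac{2}{p-2}}\!\big(\tfrac{p-2}{2}x\big)$, so $\phi_{\alpha^2}(x)=\alpha^{\frac{2}{p-2}}\phi_1(\alpha x)$ and $\|\phi_{\alpha^2}\|_{L^2(\R)}^2=\alpha^{\sigma}\|\phi_1\|_{L^2(\R)}^2$ for $\alpha>0$, while by the substitution $s=\tanh\big(\tfrac{p-2}{2}x\big)$,
\begin{equation*}
\|\phi_1\|_{L^2(\R)}^2=\Big(\tfrac{p}{2}\Big)^{\frac{2}{p-2}}\,\f{4\int_0^1(1-s^2)^{\frac{4-p}{p-2}}\,ds}{p-2}=\gamma_p^{-\frac{6-p}{p-2}},
\end{equation*}
that is $\|\phi_{\alpha^2}\|_{L^2(\R)}^2=(\alpha/\gamma_p)^{\frac{6-p}{p-2}}$. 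For $2<p\le 4$ and $\alpha>0$, Theorem \ref{sigmapos-pleq4} says ground states at mass $\mu$ exist if and only if $\mu>\|\phi_{\alpha^2}\|_{L^2(\R)}^2=(\alpha/\gamma_p)^{(6-p)/(p-2)}$; raising to the positive power $\tfrac{p-2}{6-p}$, this is equivalent to $\alpha<\gamma_p\mu^{\frac{p-2}{6-p}}$. Combined with the case $\alpha\le 0$, this gives $(i)$.

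For $4<p<6$ and $\alpha>0$, Theorem \ref{sigmapos-p>4} furnishes $\widetilde{\mu}(\alpha)<\|\phi_{\alpha^2}\|_{L^2(\R)}^2$ such that ground states at mass $\mu$ with strength $\alpha$ exist if and only if $\mu\ge\widetilde{\mu}(\alpha)$. Applying the $\alpha=1$ case together with the scaling equivalence, a ground state at mass $\mu$ with strength $\alpha>0$ exists if and only if $\mu\ge\alpha^{\sigma}\widetilde{\mu}(1)$ (forcing in particular $\widetilde{\mu}(\alpha)=\alpha^{\sigma}\widetilde{\mu}(1)$), i.e.\ if and only if $\alpha\le\widetilde{h}(\mu):=\big(\mu/\widetilde{\mu}(1)\big)^{\frac{p-2}{6-p}}$. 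Finally, the strict bound $\widetilde{\mu}(1)<\|\phi_1\|_{L^2(\R)}^2=\gamma_p^{-(6-p)/(p-2)}$ — which is the $\alpha=1$ instance of Theorem \ref{sigmapos-p>4} — becomes, after raising to the positive power $\tfrac{p-2}{6-p}$, exactly $\widetilde{h}(\mu)>\gamma_p\mu^{\frac{p-2}{6-p}}$; with the case $\alpha\le 0$ this proves $(ii)$.

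The only genuinely delicate point is the first paragraph: one must check that the dilation $u\mapsto u_\lambda$ is indeed a bijection of the mass-constraint manifolds with the stated covariance of $F_\alpha$ and of the $L^2$-norm, and that it respects the class of positive $H^1(\Rp)$-functions, so that the existence set in the $(\alpha,\mu)$-plane is a cone invariant under $(\alpha,\mu)\mapsto(t\alpha,t^{\sigma}\mu)$. Everything else is bookkeeping: combining the three existence theorems with the evaluation of $\|\phi_1\|_{L^2(\R)}^2$ and solving the resulting inequalities for $\alpha$.
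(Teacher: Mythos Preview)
Your proof is correct. For part $(i)$ your argument coincides with the paper's: both invert the condition $\mu>\|\phi_{\alpha^2}\|_{L^2(\R)}^2$ coming from Theorem~\ref{sigmapos-pleq4}, after identifying $\|\phi_{\alpha^2}\|_{L^2(\R)}^2=(\alpha/\gamma_p)^{(6-p)/(p-2)}$.

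For part $(ii)$, however, your route is genuinely different and considerably shorter. The paper establishes invertibility of $\alpha\mapsto\widetilde{\mu}(\alpha)$ by applying the Implicit Function Theorem to the two--equation system $\mu=M(\omega,\alpha)$, $F(\eta^{\mu})=-\theta_p\mu^{2\beta+1}$, computing all six partial derivatives, checking the sign of the Jacobian determinant, and then extracting the sign of $\widetilde{\mu}'(\alpha)$ from Cramer's rule. Your scaling argument bypasses all of this: the covariance $F_\alpha(u_\lambda)=\lambda^{(p+2)/2}F_{\alpha\lambda^{-(p-2)/2}}(u)$ forces the existence set in the $(\alpha,\mu)$--plane to be invariant under $(\alpha,\mu)\mapsto(t\alpha,t^{\sigma}\mu)$, which immediately yields the explicit power law $\widetilde{\mu}(\alpha)=\alpha^{\sigma}\widetilde{\mu}(1)$ --- a stronger conclusion than mere monotonicity --- and hence $\widetilde{h}(\mu)=(\mu/\widetilde{\mu}(1))^{(p-2)/(6-p)}$. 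What the paper's approach buys is that it does not require recognising the scaling symmetry and would remain applicable in settings where such a symmetry is broken; what your approach buys is brevity, an explicit formula for $\widetilde{h}$, and freedom from the analytic bookkeeping of the implicit differentiation.
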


The next two theorems deal instead with the critical case $p=6$.  {Let us specify that most of the statement of the first theorem follows directly combining the more general result \cite[Theorem 2]{C-18} and the fact that the sharp Gagliardo-Nirenberg on $\Rp$ is well known, but we prefer to report it here to make the presentation exhaustive.} 

\begin{theorem}[Ground states for $p=6$ and $\alpha<0$]
\label{sigmaneg-p6}
Let  $p=6$ and $\alpha<0$. Then
\begin{equation}
\F(\mu)=
\begin{cases}
-c\quad&\text{if}\quad 0<\mu< \f{\sqrt{3}\pi}{4}\\
-\infty &\text{if}\quad \mu\geq \f{\sqrt{3}\pi}{4},
\end{cases}
\end{equation}
with $c>0$. Furthermore, if $0<\mu< \f{\sqrt{3}\pi}{4}$, then  ground states of \eqref{FR+} at mass $\mu$ exist and coincide with the only positive bound state.
\end{theorem}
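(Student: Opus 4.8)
The plan is to split the statement into three parts: the dichotomy for $\F(\mu)$ with critical mass $\mu_{c}:=\tfrac{\sqrt3\,\pi}{4}$; the strict negativity of $\F(\mu)$ together with the existence of a minimizer when $0<\mu<\mu_{c}$; and the identification of that minimizer with the unique positive bound state. The number $\mu_{c}$ is read off the sharp Gagliardo--Nirenberg inequality on $\Rp$ at $p=6$: by even reflection to $\R$ and the well-known sharp inequality on the line, attained by $\phi_{1}(x)=3^{1/4}(\cosh 2x)^{-1/2}$ with $\|\phi_{1}\|_{L^{2}(\R)}^{2}=\tfrac{\sqrt3\,\pi}{2}$, one obtains
\[
\int_{0}^{+\infty}|u|^{6}\,dx\ \leq\ \frac{3}{\mu_{c}^{2}}\Big(\int_{0}^{+\infty}|u|^{2}\,dx\Big)^{2}\int_{0}^{+\infty}|u'|^{2}\,dx,
\]
with equality precisely for dilations of the half-soliton $w:=\phi_{1}|_{\Rp}$, which has mass exactly $\mu_{c}$ and \emph{zero} NLS energy, $\tfrac12\|w'\|_{L^{2}}^{2}-\tfrac16\|w\|_{L^{6}}^{6}=0$.

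For the dichotomy, when $0<\mu<\mu_{c}$ I would combine the inequality above with the elementary trace bound $|u(0)|^{2}\leq 2\|u\|_{L^{2}(\Rp)}\|u'\|_{L^{2}(\Rp)}$ in \eqref{FR+}; since $\alpha<0$ and $\mu^{2}/\mu_{c}^{2}<1$ this yields $F(u)\geq \tfrac12\big(1-\mu^{2}/\mu_{c}^{2}\big)\|u'\|_{L^{2}(\Rp)}^{2}-|\alpha|\sqrt{\mu}\,\|u'\|_{L^{2}(\Rp)}$, which is bounded below, so $\F(\mu)>-\infty$. When $\mu=\mu_{c}$, testing $F$ on the dilations $w_{\lambda}(x):=\sqrt\lambda\,w(\lambda x)\in H^{1}_{\mu_{c}}(\Rp)$ gives $F(w_{\lambda})=\lambda^{2}\cdot 0+\tfrac{\alpha}{2}\lambda\,|w(0)|^{2}\to-\infty$ as $\lambda\to+\infty$ (the $\lambda^{2}$ term vanishes since $w$ optimizes Gagliardo--Nirenberg at its own mass, and $\alpha<0$, $w(0)>0$); hence $\F(\mu_{c})=-\infty$. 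When $\mu>\mu_{c}$, replacing $w$ by $\sqrt{\mu/\mu_{c}}\,w$ makes the NLS energy strictly negative, and the same dilation argument, now dominated by the $\lambda^{2}$ term, again gives $\F(\mu)=-\infty$. Apart from the endpoint $\mu=\mu_{c}$, this dichotomy and the existence statement below for $\mu<\mu_{c}$ are exactly what \cite[Theorem~2]{C-18} delivers once the above sharp constant is plugged in.

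For negativity and existence, given any $\mu>0$ I would choose $v\in H^{1}_{\mu}(\Rp)$ with $v(0)\neq0$ and set $v_{\lambda}(x):=\sqrt\lambda\,v(\lambda x)$, $\lambda\in(0,1)$: then $F(v_{\lambda})=\lambda^{2}\big(\tfrac12\|v'\|_{L^{2}}^{2}-\tfrac16\|v\|_{L^{6}}^{6}\big)+\tfrac{\alpha}{2}\lambda\,|v(0)|^{2}$, and since $\alpha<0$ the linear term dominates as $\lambda\to0^{+}$, so $F(v_{\lambda})<0$; hence $\F(\mu)<0$, and together with the previous paragraph $\F(\mu)=-c$ with $c\in(0,+\infty)$ for $0<\mu<\mu_{c}$. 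For existence, by Proposition~\ref{compactth} and Corollary~\ref{compactcor} it suffices that $\F(\mu)$ lie strictly below the ground state level of the problem at infinity; for $p=6$ this level is the infimum of \eqref{en-sol} on $\R$ at mass $\mu$, which equals $0$ because $\mu<\mu_{c}<\tfrac{\sqrt3\,\pi}{2}=\|\phi_{1}\|_{L^{2}(\R)}^{2}$, so the strict inequality $\F(\mu)<0$ holds and a positive ground state exists.

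For the identification, a ground state $u$ is positive and satisfies \eqref{EL-delta-hl} for some $\omega$, necessarily $\omega>0$ (if $\omega\leq0$ then $-u''=u^{5}-\omega u\geq0$ forces $u$ concave, incompatible with $0<u\in H^{1}(\Rp)$). Positive $H^{1}(\Rp)$ solutions of $-u''-u^{5}+\omega u=0$ are restrictions of the translated solitons $\phi_{\omega}(\cdot-y)$, with $\phi_{\omega}(x)=\omega^{1/4}\phi_{1}(\sqrt\omega\,x)$; since $(\log\phi_{1})'(t)=-\tanh(2t)$, the Robin condition $u'(0)=\alpha u(0)$ with $\alpha<0$ forces $y=-z$, $z>0$, with $\sqrt\omega\,\tanh(2\sqrt\omega\,z)=|\alpha|$. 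This is solvable only for $\omega>\alpha^{2}$, determines $\tau:=\sqrt\omega\,z$ as a decreasing function of $\omega$ ranging over $(0,+\infty)$, and makes the mass $\int_{0}^{+\infty}\phi_{\omega}(x+z)^{2}\,dx=\sqrt3\,\arctan(e^{-2\tau})$ a strictly increasing function of $\omega$ from $0$ (as $\omega\downarrow\alpha^{2}$) to $\mu_{c}$ (as $\omega\to+\infty$); hence each $\mu\in(0,\mu_{c})$ is the mass of exactly one positive bound state, which the ground state constructed above must equal, giving also uniqueness. The delicate point is the borderline mass $\mu=\mu_{c}$: below it the sharp constant alone makes $F$ coercive modulo the controlled attractive boundary term, whereas at $\mu_{c}$ the kinetic and nonlinear terms balance exactly on the optimizer, and it is precisely the attractive point interaction — absent on $\R$ — that drives $F$ to $-\infty$ along dilations; pinning the threshold thus rests on the sharp Gagliardo--Nirenberg inequality on $\Rp$ and the scale covariance of its optimizer, the ODE monotonicity in the last step being the other, more routine, ingredient.
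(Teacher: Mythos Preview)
Your argument is correct and follows the same route as the paper: the sharp Gagliardo--Nirenberg constant on $\Rp$ fixes the threshold $\mu_c=\tfrac{\sqrt3\pi}{4}$, scaling the half-soliton handles $\mu\geq\mu_c$, and for $\mu<\mu_c$ existence comes from concentration-compactness (the paper simply cites \cite[Theorem~2]{C-18}) while uniqueness comes from the monotone mass--frequency relation (the paper uses Corollary~\ref{station-numb-crit}, which you have essentially rederived). Your write-up is more self-contained than the paper's, which outsources the entire $\mu<\mu_c$ existence to \cite{C-18}.

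One formal point to fix: you invoke Proposition~\ref{compactth} and Corollary~\ref{compactcor}, but both are stated in the paper only for $2<p<6$. Their proofs do carry over to $p=6$ when $\mu<\mu_c$ --- the only place subcriticality is used is to bound minimizing sequences in $H^1$, and you have already supplied that bound via the coercivity estimate $F(u)\geq\tfrac12(1-\mu^2/\mu_c^2)\|u'\|_2^2-|\alpha|\sqrt\mu\,\|u'\|_2$ --- but you should say so explicitly rather than cite the subcritical statements as written. Alternatively, since you already note that \cite[Theorem~2]{C-18} delivers existence for $\mu<\mu_c$, you can simply rest on that citation (as the paper does) and drop the reference to Proposition~\ref{compactth}/Corollary~\ref{compactcor}.
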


\begin{theorem}[Ground states for $p=6$ and $\alpha>0$]
\label{sigmapos-p6}
Let $p=6$ and $\alpha>0$. Then 
\begin{equation}
\F(\mu)=
\begin{cases}
0\quad&\text{if}\quad 0<\mu\leq \f{\sqrt{3}\pi}{4}\\
-\infty &\text{if}\quad \mu> \f{\sqrt{3}\pi}{4}
\end{cases}
\end{equation}
and ground states of \eqref{FR+} at mass $\mu$ do not exist for any $\mu>0$.
\end{theorem}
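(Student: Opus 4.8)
The plan is to determine $\F(\mu)$ explicitly by pairing the sharp Gagliardo--Nirenberg inequality on the half-line with a mass-preserving rescaling, and then to exclude minimizers by inspecting the equality cases; unlike the subcritical theorems, the concentration-compactness input of Proposition \ref{compactth} is not needed here. Set $\mu^{*}:=\tf{\sqrt{3}\pi}{4}$. The starting point is the sharp Gagliardo--Nirenberg inequality on $\Rp$,
\begin{equation*}
\int_{0}^{+\infty}|u|^{6}\leq\f{16}{\pi^{2}}\left(\int_{0}^{+\infty}|u|^{2}\right)^{2}\int_{0}^{+\infty}|u'|^{2}\qquad\text{for all }u\in H^{1}(\Rp),
\end{equation*}
which follows from the classical inequality on $\R$ by even reflection and whose extremals are exactly the half-solitons $\phi_{\omega}|_{\Rp}$, that is the restrictions to $\Rp$ of the solitons of \eqref{en-sol} centred at the origin; each of these satisfies $\|\phi_{\omega}\|_{L^{2}(\Rp)}^{2}=\mu^{*}$ and $\phi_{\omega}(0)=(3\omega)^{1/4}>0$. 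Since $\alpha>0$, the boundary term in \eqref{FR+} is nonnegative, so for $u\in H^{1}_{\mu}(\Rp)$ one has $F(u)\geq\tf12\int_{0}^{+\infty}|u'|^{2}-\tf16\int_{0}^{+\infty}|u|^{6}\geq\big(\tf12-\tf{8\mu^{2}}{3\pi^{2}}\big)\int_{0}^{+\infty}|u'|^{2}$, which is nonnegative exactly when $\mu\leq\mu^{*}$; hence $\F(\mu)\geq0$ for $0<\mu\leq\mu^{*}$. For the matching upper bound, valid for every $\mu>0$, I would fix any $v\in H^{1}_{\mu}(\Rp)$ and evaluate $F$ on the rescalings $v_{\lambda}(x):=\sqrt{\lambda}\,v(\lambda x)\in H^{1}_{\mu}(\Rp)$: since $F(v_{\lambda})=\lambda^{2}\big(\tf12\int_{0}^{+\infty}|v'|^{2}-\tf16\int_{0}^{+\infty}|v|^{6}\big)+\tf{\alpha\lambda}{2}|v(0)|^{2}\to0$ as $\lambda\to0^{+}$, we get $\F(\mu)\leq0$, and therefore $\F(\mu)=0$ for $0<\mu\leq\mu^{*}$.

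For $\mu>\mu^{*}$ I would instead exhibit a test family diverging to $-\infty$. Let $\phi$ be a half-soliton, so that $\|\phi\|_{L^{2}(\Rp)}^{2}=\mu^{*}$ and $\tf12\int_{0}^{+\infty}|\phi'|^{2}=\tf16\int_{0}^{+\infty}|\phi|^{6}$, the latter because the corresponding soliton has vanishing energy in \eqref{en-sol}; set $w:=\sqrt{\mu/\mu^{*}}\,\phi\in H^{1}_{\mu}(\Rp)$. A one-line computation using $\sqrt{\mu/\mu^{*}}>1$ gives $\tf12\int_{0}^{+\infty}|w'|^{2}-\tf16\int_{0}^{+\infty}|w|^{6}<0$, so that the rescalings $w_{\lambda}(x):=\sqrt{\lambda}\,w(\lambda x)\in H^{1}_{\mu}(\Rp)$ satisfy $F(w_{\lambda})=\lambda^{2}\big(\tf12\int_{0}^{+\infty}|w'|^{2}-\tf16\int_{0}^{+\infty}|w|^{6}\big)+\tf{\alpha\lambda}{2}|w(0)|^{2}\to-\infty$ as $\lambda\to+\infty$; hence $\F(\mu)=-\infty$.

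It remains to rule out ground states for every $\mu>0$. If $\mu>\mu^{*}$ there is nothing to prove, since $\F(\mu)=-\infty$. If $\mu<\mu^{*}$, the coefficient $\tf12-\tf{8\mu^{2}}{3\pi^{2}}$ is strictly positive, so $F(u)=\F(\mu)=0$ would force $\int_{0}^{+\infty}|u'|^{2}=0$, hence $u$ constant and, being in $L^{2}(\Rp)$, identically zero, contradicting $\|u\|_{L^{2}(\Rp)}^{2}=\mu>0$. If $\mu=\mu^{*}$, write $F(u)\geq\tf{\alpha}{2}|u(0)|^{2}+\big(\tf12\int_{0}^{+\infty}|u'|^{2}-\tf16\int_{0}^{+\infty}|u|^{6}\big)$ with both summands nonnegative; then $F(u)=0$ forces simultaneously $u(0)=0$ and equality in the sharp Gagliardo--Nirenberg inequality, i.e. $u=\phi_{\omega}|_{\Rp}$ for some $\omega>0$, which is impossible because $\phi_{\omega}(0)=(3\omega)^{1/4}>0$. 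The only genuinely delicate point is the rigidity in the equality case of the Gagliardo--Nirenberg inequality on $\Rp$ — that its extremals are precisely the Neumann half-solitons — but this again reduces, via the even reflection that produced the inequality, to the well-known rigidity statement on $\R$.
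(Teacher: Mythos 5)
Your proposal is correct. For the computation of $\F(\mu)$ you follow essentially the same route as the paper: the sharp Gagliardo--Nirenberg inequality on $\Rp$ with constant $K_6(\Rp)=\tfrac{16}{\pi^2}$ gives $\F(\mu)\ge 0$ for $\mu\le\tfrac{\sqrt3\pi}{4}$, the mass-preserving dilation $u\mapsto\sqrt\la\, u(\la\cdot)$ sent to $0$ gives the matching upper bound, and for $\mu>\tfrac{\sqrt3\pi}{4}$ an amplitude-rescaled half-soliton combined with the same dilation sent to $+\infty$ drives $F$ to $-\infty$. Your version of this last step is in fact slightly cleaner than the paper's: since $F(w_\la)=\la^2 E(w,\Rp)+\tfrac{\al\la}{2}|w(0)|^2$ with $E(w,\Rp)<0$, the quadratic term dominates for large $\la$, and there is no need to first choose the frequency $\omega$ large so as to make $F(w)$ itself negative before dilating, as the paper does. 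Where you genuinely diverge is the non-existence part: the paper rules out ground states by invoking the classification of bound states (Corollary \ref{station-numb-crit}), which shows that positive bound states exist only for $\tfrac{\sqrt3\pi}{4}<\mu<\tfrac{\sqrt3\pi}{2}$, a range in which $\F(\mu)=-\infty$; you instead argue purely variationally, using strict positivity of the Gagliardo--Nirenberg coefficient for $\mu<\tfrac{\sqrt3\pi}{4}$ and, at the threshold mass, the rigidity of the equality case (a zero-energy minimizer would have to satisfy both $u(0)=0$ and equality in the sharp inequality, whereas the extremals at that mass are exactly the Neumann half-solitons, which do not vanish at the origin). Both arguments are valid; yours is self-contained modulo the standard characterization of the Gagliardo--Nirenberg extremals on $\Rp$, which, as you correctly note, reduces to the one on $\R$ by even reflection.
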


In both Theorem \ref{sigmaneg-p6} and Theorem \ref{sigmapos-p6}, we observe that the critical value of the mass $\mu^{*}=\f{\sqrt{3}\pi}{4}$, below which the energy is bounded from below and above which the energy becomes unbounded, is the same as for the energy \eqref{FR+} with $\alpha=0$, i.e. without point interaction. Nevertheless, some new phenomena appear. In particular, while for $\alpha=0$ ground states exist for $\mu=\mu^{*}$ only (see Section \ref{subRR+}) {and, in presence of an attractive point interaction, they exist only for masses strictly below the critical mass $\mu=\mu^{*}$, if one adds a repulsive point interaction, then ground states do not exist for any value of the mass.}

\subsection*{Organization of the paper}

\begin{itemize}
\item 
Section \ref{prel} introduces some preliminary results concerning the standard NLS in dimension one both in the $L^{2}$-subcritical and critical case; 
\item 
Section \ref{char-bs} collects some useful results about bound states;
 \item 
Section \ref{thm-subcase} contains the proofs of Theorem \ref{sigmaneg-p<6}, Theorem \ref{sigmapos-pleq4}, Theorem \ref{sigmapos-p>4} and Proposition \ref{GSalpha};
 \item 
Section \ref{thm-critcase} contains the proofs of Theorem \ref{sigmaneg-p6} and Theorem \ref{sigmapos-p6}.
\end{itemize}

\subsection*{Notation} 
In the following, when this does not create confusion we use the shortened notation $\|u\|_{q}$ to denote $\|u\|_{L^{q}(\R^{+})}$ for every $q\in [2,+\infty]$.

\section{Preliminaries}
\label{prel}
Given $X=\R, \R^{+}$, the minimization problem 
\begin{equation*}
\label{infE}
\Eps(\mu,X):=\inf_{v\in H^1_\mu(X)} E(v,X)\,,
\end{equation*}
where $E(\cdot,X):H^1(X)\to\R$ is the standard NLS energy functional
\begin{equation}
\label{E}
E(u,X):=\frac{1}{2}\int_X\abs{u'(x)}^2\,dx-\frac{1}{p}\int_X\abs{u(x)}^p\,dx,
\end{equation}
with $2<p\leq 6$, is nowadays classical. Let us recall in the following the basic results concerning ground states on $\R$ and $\R^{+}$ both in the subcritical case $2<p<6$ and in the critical case $p=6$.

\subsection{Subcritical NLSE on $\R$ and on $\R^{+}$}
\label{subRR+}
The peculiarity of the subcritical case $2<p<6$ is that the energy \eqref{E} is bounded from below in $H^{1}_{\mu}(X)$  for every $\mu>0$. This is a consequence of the application of the so-called Gagliardo-Nirenberg inequalities, i.e. for every $p>2$ 
\begin{equation}
\label{GNp}
\|u\|_{L^{p}(X)}^{p}\leq K_{p} (X)\|u'\|_{L^{2}(X)}^{\f{p}{2}-1}\|u\|_{L^{2}(X)}^{\f{p}{2}+1}\quad \forall \,u\in H^{1}(X),
\end{equation}
with 
\begin{equation}
\label{defKp}
K_{p}(X):=\sup_{\substack{u\in H^{1}(X), \\u\not\equiv 0}}\f{\|u\|_{L^{p}(X)}^{p}}{\|u'\|_{L^{2}(X)}^{\f{p}{2}-1}\|u\|_{L^{2}(X)}^{\f{p}{2}+1}}<+\infty.
\end{equation}
Since it will be useful in the following, we recall also the Gagliardo-Nirenberg inequality when $p=+\infty$, that is
\begin{equation}
\label{GNinf}
\|u\|_{L^{\infty}(X)}^{2}\leq K_{\infty}(X) \|u'\|_{L^{2}(X)}\|u\|_{L^{2}(X)} \quad \forall u\in H^{1}(X),
\end{equation}
with 
\begin{equation*}
K_{\infty}(X):=\sup_{\substack{u\in H^{1}(X), \\u\not\equiv 0}}\f{\|u\|_{L^{\infty}(X)}^{2}}{\|u'\|_{L^{2}(X)}\|u\|_{L^{2}(X)}}=
\begin{cases}
1\quad&\text{if}\quad X=\R,\\
2 &\text{if}\quad X=\R^{+}.
\end{cases}
\end{equation*}

Let us first consider the case $X=\R$. Standard variational arguments show that ground states are solutions to the stationary nonlinear Schr\"odinger equation
\begin{equation}
\label{nlse}
-u''-|u|^{p-2}u+\omega u=0\qquad\text{on }\R
\end{equation}
for some $\omega>0$. In fact, for every $\omega>0$ the unique (up to translations) positive solution in $H^{1}(\R)$ of \eqref{nlse} is
\begin{equation}
\label{soliton}
\phi_\omega(x)=\left[\frac{p}{2}\omega\left(\sech^2\left(\left(\frac{p}{2}-1\right)\sqrt{\omega}|x|\right)\right)\right]^{\frac{1}{p-2}}\,.
\end{equation}
{ The mass of $\phi_\omega$ is given explicitly by
\begin{equation}
\label{mass phi w}
\|\phi_\omega\|_{L^{2}(\R)}^2=\frac{4\left(\frac{p}{2}\right)^\frac{2}{p-2}}{p-2}\omega^{\frac{6-p}{2(p-2)}}\int_{0}^1(1-s^2)^{\frac{4-p}{p-2}}\,ds,
\end{equation}
which is a continuous, strictly increasing and unbounded function of $\omega$.}
Moreover, for every $2<p<6$ and $\mu>0$ there exists a unique $\omega=\omega(\mu)$ such that $\phi_{\mu}:=\phi_{\omega(\mu)}$ is the unique (up to translations) positive ground state of \eqref{E} in $H_\mu^1(\R)$. Such ground states are called solitons, and their dependence on $\mu$ is given by 
\begin{equation}
	\label{phi-mu}
	\phi_{\mu}(x)=C_p\mu^{\f{2}{6-p}} \sech^{\f{2}{p-2}}\left(c_p\mu^\beta x\right)\,,\qquad \beta:=\f{p-2}{6-p}\,,
\end{equation}
where $C_p,\,c_p>0$ depends on $p$ only, and one can easily compute
\begin{equation}
	\label{E phi mu}
	\Eps(\mu,\R)=E(\phi_{\omega(\mu)},\R)=-\theta_p\mu^{2\beta+1}
\end{equation}
where $\theta_p>0$ depends on $p$ only {and satisfies the relation 
\begin{equation}
\label{thetap}
2\theta_{p}(2\beta+1)=\omega(1),
\end{equation}
with $\omega(1)$ being the only $\omega$ corresponding to $\|\phi_\omega\|_{L^{2}(\R)}^2=1$ in \eqref{mass phi w}.}

In the case $X=\R^{+}$, ground states solve 
\begin{equation}
\label{nlseR+}
\begin{cases}
-u''-|u|^{p-2}u+\omega u=0\qquad\text{on }\R^{+}\\
u'(0^{+})=0,
\end{cases}
\end{equation}
and  the unique positive ground state of $E(\cdot,\R^{+})$  belonging to $H^{1}_{\mu}(\R^{+})$ is given by half of the soliton of mass $2\mu$ and 
\begin{equation}
	\label{E mu R+}
	\Eps(\mu,\R^{+})=\f{1}{2}E(\phi_{2\mu},\R)=-\theta_p2^{2\beta}\mu^{2\beta+1}<-\theta_p\mu^{2\beta+1}=\Eps(\mu,\R).
\end{equation}

\subsection{Critical NLSE on $\R$ and on $\R^{+}$}
In the critical case $p=6$, it is well known that
\begin{equation*}
\Eps(\mu,X)=
\begin{cases}
0\quad\text{if}\quad \mu\leq \sqrt{\f{3}{K_{6}(X)}}, \\
-\infty\quad\text{if}\quad\mu> \sqrt{\f{3}{K_{6}(X)}},
\end{cases}
\end{equation*}
and ground states exist if and only if $\mu=\sqrt{\f{3}{K_{6}(X)}}$. In particular, $K_{6}(\R^{+})=4 K_{6}(\R)=\f{16}{\pi^{2}}$ and the supremum in definition \eqref{defKp} is realized by the solitons (up to translations) or half of the solitons
\begin{equation}
\label{sol-p6}
\phi_{\omega}(x)=\left(3\omega \sech^{2}\left(2\sqrt{\omega}x\right)\right)^{\f{1}{4}} 
\end{equation} 
respectively, whose mass is equal to $\sqrt{\f{3}{K_{6}(X)}}$ for every $\omega>0$. In particular, the solitons in \eqref{sol-p6} are all the positive solutions of \eqref{nlse} or \eqref{nlseR+}, hence positive solutions exist if and only if $\mu=\sqrt{\f{3}{K_{6}(X)}}$.

\section{Properties of bound states}
\label{char-bs}
This section collects some useful results about bound states of \eqref{EL-delta-hl}, starting from the next proposition.

\begin{proposition}
\label{stat-fix-om}
Let $p>2$, $\alpha\in \R\setminus\{0\}$ and $\omega>0$.
If $0<\omega\leq\alpha^{2}$, then \eqref{EL-delta-hl} does not admit any bound state.

If $\omega>\alpha^{2}$, then $\eta^{\omega,\alpha}(\cdot)=\phi_{\omega}(\cdot-a)$ is the only bound state of \eqref{EL-delta-hl} and is positive, up to a change of sign, with 

\begin{equation}
\label{a}
a:=\f{2\tanh^{-1}(\f{\alpha}{\sqrt{\omega}})}{(p-2)\sqrt{\omega}}.
\end{equation}

\end{proposition}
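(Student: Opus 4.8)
The plan is to integrate the autonomous equation in \eqref{EL-delta-hl} by means of its first integral, classify the solutions that decay at $+\infty$, and then read the boundary condition off the resulting one-parameter family. Let $u$ be a non-trivial bound state. Since $u\in H^1(\Rp)$ one has $u(x)\to 0$ as $x\to+\infty$, and from $u''=\omega u-|u|^{p-2}u$ together with the embedding $H^1(\Rp)\hookrightarrow L^\infty(\Rp)$ we get $u''\in L^2(\Rp)$, hence $u\in H^2(\Rp)$ and $u'(x)\to 0$ as well. Multiplying the equation by $u'$ and integrating shows that $\mathcal{H}(x):=\tfrac12 u'(x)^2-\tfrac{\omega}{2}u(x)^2+\tfrac1p|u(x)|^p$ is constant on $\Rp$; letting $x\to+\infty$ gives $\mathcal{H}\equiv 0$, that is $u'(x)^2=\omega\, u(x)^2-\tfrac2p|u(x)|^p$ on $\Rp$.

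Next I would show that $u$ is, up to sign, the restriction to $\Rp$ of a translated soliton. If $u(x_0)=0$ for some $x_0\geq 0$, then $\mathcal H\equiv 0$ forces $u'(x_0)=0$ too; as $t\mapsto|t|^{p-2}t$ is $C^1$ for $p>2$, the Cauchy problem for \eqref{EL-delta-hl} has a unique solution, so $u\equiv 0$, a contradiction. Hence $u$ has constant sign, and replacing $u$ by $-u$ if needed we may take $u>0$ on $\Rp$. I would then extend $u$ to the left of $0$ by solving the equation backwards: since $\mathcal H\equiv 0$ forces $0<u\leq(\tfrac p2\omega)^{1/(p-2)}$, the extension stays bounded, hence global, and lies on the positive, zero-energy homoclinic loop of the saddle at the origin; linearizing at $0$ shows it decays exponentially as $x\to-\infty$, so the extended function is a positive $H^1(\R)$ solution of \eqref{nlse}. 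By the uniqueness up to translations recalled in Section~\ref{subRR+}, it equals $\phi_\omega(\cdot-a)$ for some $a\in\R$, whence $u=\phi_\omega(\cdot-a)$ on $\Rp$; each such translate is a genuine smooth solution there, since $\phi_\omega$ is built from $\sech$.

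Finally I would impose the condition $u'(0)=\alpha u(0)$. From the first integral one has $(\phi_\omega'/\phi_\omega)^2=\omega-\tfrac2p\phi_\omega^{p-2}$, and inserting the explicit formula \eqref{soliton}, together with the fact that $\phi_\omega$ is strictly increasing on $(-\infty,0)$ and strictly decreasing on $(0,+\infty)$ (so $\phi_\omega'(-a)$ carries the sign of $a$, just like $\tanh$), gives
\[
\frac{u'(0)}{u(0)}=\frac{\phi_\omega'(-a)}{\phi_\omega(-a)}=\sqrt{\omega}\,\tanh\!\left(\left(\tfrac p2-1\right)\sqrt{\omega}\,a\right).
\]
Thus $u'(0)=\alpha u(0)$ is equivalent to $\tanh\big((\tfrac p2-1)\sqrt{\omega}\,a\big)=\alpha/\sqrt{\omega}$, which, since $\tanh\colon\R\to(-1,1)$ is a strictly increasing bijection, has no solution when $|\alpha|\geq\sqrt{\omega}$ — i.e. when $0<\omega\leq\alpha^2$ — and exactly one solution, $a=\tfrac{2\tanh^{-1}(\alpha/\sqrt{\omega})}{(p-2)\sqrt{\omega}}$, when $\omega>\alpha^2$; this is precisely \eqref{a}. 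The same computation applied to $-\phi_\omega(\cdot-a)$ yields the identical equation for $a$, so the bound state is unique up to a change of sign and is positive in one of the two signs, completing the argument. The delicate point is the classification step: converting the qualitative phase-plane picture — decay at $+\infty$ confines the trajectory to the zero-energy homoclinic orbit — into the clean identity $u=\phi_\omega(\cdot-a)$, and then keeping careful track of the sign of $\phi_\omega'$ in the final computation.
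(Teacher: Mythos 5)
Your argument is correct and follows essentially the same route as the paper: reduce every bound state to a translate $\phi_\omega(\cdot-a)$ of the soliton and then solve the Robin condition $u'(0)=\alpha u(0)$, which becomes $\sqrt{\omega}\,\tanh\bigl(\tfrac{p-2}{2}\sqrt{\omega}\,a\bigr)=\alpha$, exactly as in \eqref{u'=alfau}. The only difference is that you supply, via the first integral and the phase-plane classification, a full justification of the reduction step that the paper simply asserts; that part is also carried out correctly.
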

\begin{proof}
Every positive solution of \eqref{EL-delta-hl} has to coincide with a proper translation of the soliton $\phi_{\omega}$ in order to satisfy the boundary condition at the origin. In particular, if one consider a solution $u(x)=\phi_{\omega}(x-a)$, then the condition $u'(0)=\alpha u(0)$ becomes
\begin{equation}
\label{u'=alfau}
\sqrt{\omega}\tanh\left(\f{p-2}{2}\sqrt{\omega}a\right)=\alpha.
\end{equation}
If $\omega\le \alpha^{2}$, then the modulus of the left hand-side is strictly less than the modulus of the right hand-side, hence there is no $a\in \R$ satisfying \eqref{u'=alfau}. If instead $\omega>\alpha^{2}$, then by the monotonicity properties of the function $\tanh(\cdot)$ there exists an only $a\in \R$ for which \eqref{u'=alfau} holds. 
\end{proof}

In view of Proposition \ref{stat-fix-om}, let us define for every $p>2$ the function
\begin{equation}
\label{defm-alpha}
\begin{aligned}
M\colon(\alpha^{2},+\infty)&\times \R\setminus\{0\}&\to (0,+\infty)\\
(\omega&,\alpha)\quad&\mapsto \|\eta^{\omega,\alpha}\|_2^{2}.
\end{aligned}
\end{equation}

Since the parameter $\alpha\in \R\setminus\{0\}$ will often be fixed, it is convenient to denote the bound states $\eta^{\omega,\alpha}$ simply by $\eta^{\omega}$ and, with a slight abuse of notation, to consider the function
\begin{equation}
\label{defmu}
\begin{aligned}
M\colon(\alpha^{2},+\infty)&\to (0,+\infty)\\
\omega\quad&\mapsto {M}(\omega,\alpha).
\end{aligned}
\end{equation}
Moreover, we denote 
\begin{equation}
\label{A-mu}
\A_{\mu}:=\{\eta^{\omega}\,:\,M(\omega)=\mu\},
\end{equation}
i.e. the set of all the positive bound states of mass $\mu$. The cardinality of $\A_{\mu}$ will be denoted by $|\A_{\mu}|$ in the following.

\subsection{Bound states in the subcritical case}

The next proposition collects some properties of the function \eqref{defmu} when $2<p<6$.
\begin{proposition}
\label{mu-study}
Let $2<p<6$ and $\alpha\in \R\setminus\{0\}$. Then the function $M$ in \eqref{defmu} is of class $C^{1}((\alpha^{2},+\infty))$ and
\begin{equation}
\label{mass-eta}
M(\omega)=\frac{p^\frac{2}{p-2}}{2^{\f{4-p}{p-2}}(p-2)}\omega^{\frac{6-p}{2(p-2)}}\int_{-\f{\alpha}{\sqrt{\omega}}}^1(1-s^2)^{\frac{4-p}{p-2}}\,ds.
\end{equation}
Moreover:
\begin{itemize}
\item[$(i)$]if $\alpha<0$, then $M'(\omega)>0$ for every $\omega\in(\alpha^{2},+\infty)$ and $M((\alpha^{2},+\infty))=\left(0,+\infty\right)$,
\item[$(ii)$] if $\alpha>0$ and $p\leq 4$, then $M'(\omega)>0$ for every $\omega\in(\alpha^{2},+\infty)$ and 

$M((\alpha^{2},+\infty))=\left(\|\phi_{\alpha^{2}}\|_{L^{2}(\R)}^{2},+\infty\right)$,
\item[$(iii)$] if $\alpha>0$ and $p>4$, then 
\[
\lim_{\omega\to \alpha^{2}}M(\omega)=\|\phi_{\alpha^{2}}\|_{L^{2}(\R)}^{2},\quad \lim_{\omega\to +\infty}M(\omega)=+\infty
\]
and there exists $\omega^{*}=\omega^{*}(\alpha)>\alpha^{2}$ such that $M'(\omega)<0$ if $\alpha^{2}<\omega<\omega^{*}$, $M'(\omega)=0$ if $\omega=\omega^{*}$ and $M'(\omega)>0$ if $\omega>\omega^{*}$. In particular, the frequency $\omega^{*}=\omega^{*}(\alpha)$ is the only solution of the equation
\begin{equation}
\label{omega*}
\f{6-p}{2}M(\omega,\alpha)=\left(\f{p}{2}\right)^{\f{2}{p-2}}\alpha(\omega-\alpha^{2})^{\f{4-p}{p-2}}.
\end{equation}
\end{itemize}
\end{proposition}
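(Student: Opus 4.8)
The plan is to turn the integral defining $M$ into a completely explicit one-variable function and then read off all the claimed properties by elementary calculus.

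\emph{Step 1: the closed formula \eqref{mass-eta} and $C^{1}$-regularity.} By Proposition~\ref{stat-fix-om}, for $\omega>\alpha^{2}$ one has $\eta^{\omega}=\phi_{\omega}(\cdot-a)$ with $a$ as in \eqref{a}, so $M(\omega)=\int_{0}^{+\infty}\phi_{\omega}(x-a)^{2}\,dx=\int_{-a}^{+\infty}\phi_{\omega}(y)^{2}\,dy$. Inserting the explicit soliton \eqref{soliton} and substituting $s=\tanh\!\big(\tf{p-2}{2}\sqrt{\omega}\,y\big)$ (legitimate on all of $\R$ since $\phi_{\omega}$ is even), one has $1-s^{2}=\sech^{2}(\cdots)$ and $ds=\tf{p-2}{2}\sqrt{\omega}\,(1-s^{2})\,dy$, so $\phi_{\omega}(y)^{2}\,dy$ becomes a $p$-dependent constant times $\omega^{\frac{6-p}{2(p-2)}}(1-s^{2})^{\frac{4-p}{p-2}}\,ds$; the lower endpoint $y=-a$ becomes $s=-\alpha/\sqrt{\omega}$ precisely because of the value of $a$ in \eqref{a}. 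Collecting the powers of $2$ and $p$ gives \eqref{mass-eta}. Since $\omega>\alpha^{2}$ forces $-\alpha/\sqrt{\omega}\in(-1,1)$, while $\tf{4-p}{p-2}>-1$ for every $p\in(2,6)$, the integral in \eqref{mass-eta} converges and depends smoothly on $\omega$ through its lower limit; together with the smooth prefactor $\omega^{\frac{6-p}{2(p-2)}}$ this gives $M\in C^{1}((\alpha^{2},+\infty))$ (in fact $C^{\infty}$).

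\emph{Step 2: differentiation and the case $\alpha<0$.} Write $\gamma:=\tf{6-p}{2(p-2)}>0$, $q:=\tf{4-p}{p-2}$, $G(c):=\int_{c}^{1}(1-s^{2})^{q}\,ds$ and $c(\omega):=-\alpha/\sqrt{\omega}$, so that $M(\omega)=A\,\omega^{\gamma}G(c(\omega))$ with $A>0$ the constant of \eqref{mass-eta}. A direct computation yields
\[
M'(\omega)=A\,\omega^{\gamma-1}\Big[\,\gamma\,G(c(\omega))-\f{\alpha}{2\sqrt{\omega}}\Big(\f{\omega-\alpha^{2}}{\omega}\Big)^{q}\,\Big].
\]
If $\alpha<0$, both terms in the bracket are strictly positive, hence $M'>0$ on $(\alpha^{2},+\infty)$. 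Moreover $G(c(\omega))\to 0$ as $\omega\to\alpha^{2}$ (because $c(\omega)\to 1$), whereas $G(c(\omega))\to\int_{0}^{1}(1-s^{2})^{q}\,ds>0$ and $\omega^{\gamma}\to+\infty$ as $\omega\to+\infty$; thus $M((\alpha^{2},+\infty))=(0,+\infty)$, proving $(i)$.

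\emph{Step 3: the case $\alpha>0$.} Now $c(\omega)\in(-1,0)$ and $\omega\mapsto c(\omega)$ is an increasing bijection onto $(-1,0)$, so it suffices to study $M$ as a function of $c$. Using $\omega^{\gamma}=\alpha^{2\gamma}(-c)^{-2\gamma}$ one finds that $M'(\omega)$ has the same sign as
\[
P(c):=2\gamma\int_{c}^{1}(1-s^{2})^{q}\,ds+c\,(1-c^{2})^{q}.
\]
The key point is the algebraic identity $1-2\gamma+2q=0$ — valid because $1-2\gamma=\tf{2(p-4)}{p-2}$ and $2q=-\tf{2(p-4)}{p-2}$ — which collapses the derivative of $P$ to $P'(c)=\tf{2(p-4)}{p-2}\,(1-c^{2})^{q-1}$, of constant sign on $(-1,0)$. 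Since $P(0)=2\gamma\int_{0}^{1}(1-s^{2})^{q}\,ds>0$, two cases arise. If $2<p\leq 4$ then $P'\leq 0$, hence $P>0$ throughout $(-1,0)$ and $M$ is strictly increasing; moreover $M\to\|\phi_{\alpha^{2}}\|_{L^{2}(\R)}^{2}$ as $\omega\to\alpha^{2}$ (evaluating the limit of $A\alpha^{2\gamma}G(c)$ at $c=-1$ and matching constants with \eqref{mass phi w}) and $M\to+\infty$ as $\omega\to+\infty$, which gives $(ii)$. If $4<p<6$ then $P'>0$ and $P(c)\to-\infty$ as $c\to-1^{+}$ (the term $c(1-c^{2})^{q}$ diverges to $-\infty$ since $q<0$ and $c\to-1$), so $P$ has a unique zero $c^{*}\in(-1,0)$, negative to its left and positive to its right; with $\omega^{*}:=\alpha^{2}/(c^{*})^{2}>\alpha^{2}$ this means $M'<0$ on $(\alpha^{2},\omega^{*})$, $M'(\omega^{*})=0$ and $M'>0$ on $(\omega^{*},+\infty)$, together with the same boundary limits as above. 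Finally, multiplying the identity $M'(\omega)=0$ by $A\omega^{\gamma}$ and checking that the resulting exponents of $\omega$ and of $\omega-\alpha^{2}$ add up to $0$ rewrites it exactly as \eqref{omega*}, and uniqueness of $\omega^{*}$ follows since $M'$ changes sign only once. This proves $(iii)$.

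\emph{Main obstacle.} The only genuine difficulty is the bookkeeping of exponents: pinning down the constant in \eqref{mass-eta}, and especially spotting and verifying the cancellation $1-2\gamma+2q=0$, which is precisely what makes $P'$ one-signed and therefore makes the entire monotonicity picture — including existence and uniqueness of $\omega^{*}$ and the characterization \eqref{omega*} — go through. Everything else reduces to routine calculus on the explicit function $P$.
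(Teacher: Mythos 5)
Your proof is correct and follows essentially the same route as the paper: derive the closed formula \eqref{mass-eta}, differentiate, and reduce the sign of $M'$ to that of an auxiliary function whose derivative collapses to a single one-signed term thanks to the exponent cancellation $1-2\gamma+2q=0$ (the paper's $f'(\omega)=-\tf{4-p}{p-2}\alpha\,\omega^{\f{p-6}{2(p-2)}}(\omega-\alpha^{2})^{\f{2(3-p)}{p-2}}$ is exactly your $P'(c)$ after the substitution $c=-\alpha/\sqrt{\omega}$). The only differences are cosmetic — you reparametrize by $c\in(-1,0)$ where the paper works directly in $\omega$, and you anchor the sign analysis at $P(0)>0$ where the paper uses the boundary limits of $f$ — and all the exponent bookkeeping, including the derivation of \eqref{omega*}, checks out.
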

\begin{proof}
Expression \eqref{mass-eta} is obtained by direct computation making use of \eqref{a}. Once one has \eqref{mass-eta}, one can compute the limits of $M(\omega)$ as $\omega\to \alpha^{2}$ and $\omega\to +\infty$. Moreover,
\begin{equation}
\label{mu prime}
\begin{split}
M'(\omega)&=\f{p^{\f{2}{p-2}}}{2^{\f{2}{p-2}}(p-2)}\omega^{\f{10-3p}{2(p-2)}}\left[\f{6-p}{p-2}\int_{-\f{\alpha}{\sqrt{\omega}}}^{1}(1-s^{2})^{\f{4-p}{p-2}}\,ds-\alpha\omega^{\f{p-6}{2(p-2)}}(\omega-\alpha^{2})^{\f{4-p}{p-2}}\right]\\
&=\f{1}{(p-2)\omega}\left[\f{6-p}{2}M(\omega)-\left(\f{p}{2}\right)^{\f{2}{p-2}}\alpha(\omega-\alpha^{2})^{\f{4-p}{p-2}}\right]
\end{split}
\end{equation} 
If $\alpha<0$, then $M'(\omega)>0$ for every $\omega>\alpha^{2}$, entailing $(i)$.
If instead $\alpha>0$, the sign of $M'(\omega)$ depends on $p>2$ and is the same of
\begin{equation*}
f(\omega):=\f{6-p}{p-2}\int_{-\f{\alpha}{\sqrt{\omega}}}^{1}(1-s^{2})^{\f{4-p}{p-2}}\,ds-\alpha\omega^{\f{p-6}{2(p-2)}}(\omega-\alpha^{2})^{\f{4-p}{p-2}},
\end{equation*}
whose derivative is
\begin{equation*}
f'(\omega)=-\f{4-p}{p-2}\alpha\omega^{\f{p-6}{2(p-2)}}(\omega-\alpha^{2})^{\f{2(3-p)}{p-2}}.
\end{equation*}

On the one hand, if $2<p<4$, then $\lim_{\omega\to \alpha^{2}} f(\omega)=2\lim_{\omega\to+\infty} f(\omega)>0$ and $f'(\omega)<0$, hence $f(\omega)>0$  and 
\[
M'(\omega)=\f{p^{\f{2}{p-2}}}{2^{\f{2}{p-2}}(p-2)}\omega^{\f{10-3p}{2(p-2)}}f(\omega)>0
\]
 for every $\omega>\alpha^{2}$. If instead $p=4$, then $M'(\omega)=\f{1}{\sqrt{\omega}}$, hence $(ii)$ follows.
 
On the other hand, if $4<p<6$, then $\lim_{\omega\to \alpha^{2}} f(\omega)=-\infty$, $\lim_{\omega\to +\infty} f(\omega)>0$ and $f'(\omega)>0$, so that there exists a unique solution $\omega^{*}>\alpha^{2}$ of the equation $f(\omega)=0$. As a consequence, $M'(\omega)<0$ for $\alpha^{2}<\omega<\omega^{*}$, $M'(\omega^{*})=0$ and $M'(\omega)>0$ for $\omega>\omega^{*}$, entailing $(iii)$. 
\end{proof}

The next corollary shows how the number of positive bound states of fixed mass $\mu>0$ depends on  $\alpha$,  $p$ and $\mu$. 
\begin{corollary}
\label{station-numb}
Let $2<p<6$ and $\alpha\in \R\setminus\{0\}$. Therefore:
\begin{itemize}
\item[$(i)$]if $\alpha<0$, then $|\A_{\mu}|=1$ for every $\mu>0$;
\item[$(ii)$] if $\alpha>0$ and $2<p\le 4$, then 
\begin{equation*}
|\A_{\mu}|=
\begin{cases}
0\quad \text{if}\quad 0<\mu\leq\|\phi_{\alpha^{2}}\|_{L^{2}(\R)}^{2},\\
1\quad \text{if}\quad \mu>\|\phi_{\alpha^{2}}\|_{L^{2}(\R)}^{2};
\end{cases}
\end{equation*}
\item[$(iii)$] if $\alpha>0$ and $p>4$, then 
\begin{equation*}
|\A_{\mu}|=
\begin{cases}
0\quad \text{if}\quad 0<\mu<\mu^{*},\\
1\quad \text{if}\quad \mu=\mu^{*},\\
2\quad \text{if}\quad \mu^{*}<\mu<\|\phi_{\alpha^{2}}\|_{L^{2}(\R)}^{2},\\
1\quad \text{if}\quad \mu\geq\|\phi_{\alpha^{2}}\|_{L^{2}(\R)}^{2},
\end{cases}
\end{equation*}
with $\mu^{*}=\mu^{*}(\alpha):=M(\omega^{*}(\alpha),\alpha)$ and $\omega^{*}(\alpha)$ being the only solution of \eqref{omega*}.
\end{itemize}
\end{corollary}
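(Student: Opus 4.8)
The plan is to reduce the statement to a counting problem for the level sets of the mass function $M$ introduced in \eqref{defmu}. By Proposition \ref{stat-fix-om}, for every $\omega>\alpha^{2}$ there is exactly one positive bound state $\eta^{\omega}$ of \eqref{EL-delta-hl}, while for $\omega\le\alpha^{2}$ there is none; moreover the map $\omega\mapsto\eta^{\omega}$ is injective, since if $\eta^{\omega_{1}}=\eta^{\omega_{2}}=u\not\equiv0$ then subtracting the equations $-u''-|u|^{p-2}u+\omega_{i}u=0$ yields $(\omega_{1}-\omega_{2})u\equiv0$ on $\R^{+}$, forcing $\omega_{1}=\omega_{2}$. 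Hence, by the definition \eqref{A-mu} of $\A_{\mu}$, the cardinality $|\A_{\mu}|$ equals the number of $\omega\in(\alpha^{2},+\infty)$ solving $M(\omega)=\mu$, and the corollary follows by reading off the monotonicity of $M$ from Proposition \ref{mu-study} in each of the three regimes.

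If $\alpha<0$, then by Proposition \ref{mu-study}$(i)$ the function $M$ is strictly increasing on $(\alpha^{2},+\infty)$ with range $(0,+\infty)$, so the equation $M(\omega)=\mu$ has exactly one solution for every $\mu>0$, giving $|\A_{\mu}|=1$. If $\alpha>0$ and $2<p\le4$, then by Proposition \ref{mu-study}$(ii)$ the function $M$ is again strictly increasing, but with range $\big(\|\phi_{\alpha^{2}}\|_{L^{2}(\R)}^{2},+\infty\big)$, the endpoint being only the (unattained) limit as $\omega\to\alpha^{2}$; therefore $M(\omega)=\mu$ has no solution for $\mu\le\|\phi_{\alpha^{2}}\|_{L^{2}(\R)}^{2}$ and exactly one for $\mu>\|\phi_{\alpha^{2}}\|_{L^{2}(\R)}^{2}$, which is the claim.

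The only case needing a genuine case split is $\alpha>0$ and $p>4$. By Proposition \ref{mu-study}$(iii)$, $M$ strictly decreases on $(\alpha^{2},\omega^{*})$ from the unattained value $\|\phi_{\alpha^{2}}\|_{L^{2}(\R)}^{2}$ down to the minimum $\mu^{*}:=M(\omega^{*})$, and strictly increases on $(\omega^{*},+\infty)$ from $\mu^{*}$ to $+\infty$. Consequently the restriction of $M$ to $(\alpha^{2},\omega^{*}]$ is a bijection onto $[\mu^{*},\|\phi_{\alpha^{2}}\|_{L^{2}(\R)}^{2})$ and its restriction to $[\omega^{*},+\infty)$ is a bijection onto $[\mu^{*},+\infty)$. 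Counting preimages in the two branches gives: $|\A_{\mu}|=0$ for $\mu<\mu^{*}$; $|\A_{\mu}|=1$ for $\mu=\mu^{*}$, the only solution being $\omega=\omega^{*}$; $|\A_{\mu}|=2$ for $\mu^{*}<\mu<\|\phi_{\alpha^{2}}\|_{L^{2}(\R)}^{2}$, with one solution in each branch; and $|\A_{\mu}|=1$ for $\mu\ge\|\phi_{\alpha^{2}}\|_{L^{2}(\R)}^{2}$, since then only the branch $[\omega^{*},+\infty)$ contributes.

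As for difficulties, there is essentially no analytical obstacle here: all the work is already contained in Proposition \ref{mu-study}. The only points requiring care are the injectivity of $\omega\mapsto\eta^{\omega}$, so that counting frequencies genuinely counts bound states, and the bookkeeping of which threshold masses ($\mu^{*}$ and $\|\phi_{\alpha^{2}}\|_{L^{2}(\R)}^{2}$) are actually attained — which hinges on the domain $(\alpha^{2},+\infty)$ being open at $\alpha^{2}$ while $\omega^{*}$ is an interior point where the minimum is realized.
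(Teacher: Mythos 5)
Your proposal is correct and follows essentially the same route as the paper, which simply declares the corollary a ``straightforward consequence'' of the corresponding items of Proposition \ref{mu-study}; you merely spell out the preimage-counting and add the (easy, and in the paper implicit) observation that $\omega\mapsto\eta^{\omega}$ is injective, so that counting frequencies indeed counts elements of $\A_{\mu}$.
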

\begin{proof}
The proof of $(i)$, $(ii)$ and $(iii)$ is a straightforward consequence of $(i)$, $(ii)$ and $(iii)$ of Proposition \ref{mu-study} respectively.
\end{proof}

\begin{remark}
\label{2-stat}
The only regime in which more than one positive bound state of mass $\mu$ exists is when $\alpha>0$, $4<p<6$ and $\mu^{*}<\mu<\|\phi_{\alpha^{2}}\|_{L^{2}(\R)}^{2}$. In particular, there exist $\omega_{1}, \omega_{2}>\alpha^{2}$, with $\omega_{1}<\omega^{*}<\omega_{2}$, such that $\eta^{\omega_{1}}$, $\eta^{\omega_{2}}\in \A_{\mu}$.
\end{remark}

\begin{lemma}
Let $p>2$ and $\alpha\in \R\setminus\{0\}$. Then for every $\omega>\alpha^{2}$ there results 
\begin{equation}
\label{F-omega}
\begin{split}
F(\eta^{\omega})&=-\f{6-p}{2(p+2)}\omega M(\omega,\alpha)+\f{\alpha (p-2)}{2(p+2)}\left(\f{p}{2}(\omega-\alpha^{2})\right)^{\f{2}{p-2}}\\
&=\f{\left(\f{p}{2}\right)^{\f{2}{p-2}}}{(p+2)}\left(-\f{6-p}{p-2}\omega^{\f{p+2}{2(p-2)}}\int_{-\f{\alpha}{\sqrt{\omega}}}^1(1-s^2)^{\frac{4-p}{p-2}}\,ds+\alpha\f{p-2}{2}(\omega-\alpha^{2})^{\f{2}{p-2}}\right).
\end{split}
\end{equation}
\end{lemma}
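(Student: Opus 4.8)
The plan is to avoid integrating the explicit translated soliton and instead exploit the ODE structure of \eqref{EL-delta-hl}. Set $u:=\eta^{\omega}=\eta^{\omega,\alpha}$; by Proposition \ref{stat-fix-om} this is a smooth positive solution of \eqref{EL-delta-hl} which, being a translate of $\phi_{\omega}$, decays exponentially together with $u'$ as $x\to+\infty$. The key remark is that, multiplying the equation $-u''-u^{p-1}+\omega u=0$ by $u'$, the quantity $\f12 u'(x)^{2}+\f1p u(x)^{p}-\f{\omega}{2}u(x)^{2}$ has zero derivative, hence is constant on $\Rp$, and the decay at infinity forces that constant to be $0$; so this expression vanishes identically on $\Rp$.

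From this single identity I would extract three relations. First, evaluating it at $x=0$ and using the boundary condition $u'(0)=\alpha u(0)$ gives $\f{\alpha^{2}}{2}u(0)^{2}+\f1p u(0)^{p}-\f{\omega}{2}u(0)^{2}=0$, whence, since $u(0)>0$ and $\omega>\alpha^{2}$,
\[
u(0)^{2}=\left(\f{p}{2}(\omega-\alpha^{2})\right)^{\f{2}{p-2}}.
\]
Second, integrating the pointwise identity over $\Rp$ (all terms integrable by the exponential decay) yields the Pohozaev-type relation $\f12\|u'\|_{2}^{2}+\f1p\|u\|_{p}^{p}=\f{\omega}{2}M(\omega,\alpha)$. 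Third, multiplying the equation by $u$ and integrating by parts on $\Rp$, the Robin condition at the origin contributes a boundary term $\alpha u(0)^{2}$, giving the Nehari identity $\|u'\|_{2}^{2}+\alpha u(0)^{2}+\omega M(\omega,\alpha)=\|u\|_{p}^{p}$.

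What remains is elementary linear algebra and simplification. Solving the two-by-two system (Pohozaev, Nehari) in the unknowns $\|u'\|_{2}^{2}$ and $\|u\|_{p}^{p}$ gives $\|u\|_{p}^{p}=\f{p}{p+2}\left(2\omega M(\omega,\alpha)+\alpha u(0)^{2}\right)$ and $\|u'\|_{2}^{2}=\f{p-2}{p+2}\omega M(\omega,\alpha)-\f{2}{p+2}\alpha u(0)^{2}$; substituting into $F(u)=\f12\|u'\|_{2}^{2}-\f1p\|u\|_{p}^{p}+\f{\alpha}{2}u(0)^{2}$ and collecting coefficients yields $F(u)=-\f{6-p}{2(p+2)}\omega M(\omega,\alpha)+\f{p-2}{2(p+2)}\alpha u(0)^{2}$, which together with the value of $u(0)^{2}$ above is the first line of \eqref{F-omega}. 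The second line then follows by inserting the explicit expression \eqref{mass-eta} for $M(\omega,\alpha)$ and regrouping the powers of $p$ and $2$, using $1+\f{6-p}{2(p-2)}=\f{p+2}{2(p-2)}$ and $2\cdot 2^{\f{4-p}{p-2}}=2^{\f{2}{p-2}}$. I do not expect a genuine obstacle here: the only points needing care are checking that the boundary contributions at $+\infty$ vanish (guaranteed by the exponential decay of the translated soliton) and carrying out the slightly tedious coefficient bookkeeping; a direct computation from \eqref{soliton}, \eqref{a} and \eqref{mass-eta} is also possible but involves messier incomplete integrals.
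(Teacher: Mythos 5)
Your proposal is correct and follows essentially the same route as the paper: both derive the pointwise conserved quantity (yielding \eqref{Cons-en} and the value of $|\eta^{\omega}(0)|^{2}$) and the Nehari identity \eqref{Nehari-M}, then solve for $\|(\eta^{\omega})'\|_2^{2}$ and $\|\eta^{\omega}\|_p^{p}$ and substitute into $F$. The only difference is cosmetic — you make explicit the evaluation at $x=0$ giving $u(0)^{2}=\left(\f{p}{2}(\omega-\alpha^{2})\right)^{\f{2}{p-2}}$, which the paper uses implicitly when passing to the second line of \eqref{F-omega}.
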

\begin{proof}
Since $\eta^{\omega}$ solves \eqref{EL-delta-hl}, multiplying the equation by $\left(\eta^{\omega}\right)'$, integrating on $[x,+\infty)$ for every $x\in [0,+\infty)$ and integrating again on $[0,+\infty)$ one gets
\begin{equation}
\label{Cons-en}
\f{1}{2}\|{(\eta^{\omega})'}\|_2^{2}+\f{1}{p}\|\eta^{\omega}\|_p^{p}-\f{\omega}{2}\|\eta^{\omega}\|_2^{2}=0.
\end{equation}
Moreover, multiplying the first line of \eqref{EL-delta-hl} by $\eta^{\omega}$ and making use of the second line of \eqref{EL-delta-hl}, there results
\begin{equation}
\label{Nehari-M}
\|(\eta^{\omega})'\|_2^{2}-\|\eta^{\omega}\|_p^{p}+\alpha|\eta^{\omega}(0)|^{2}+\omega\|\eta^{\omega}\|_2^{2}=0.
\end{equation}
By using \eqref{Cons-en} and \eqref{Nehari-M}, we get 
\begin{equation*}
F(\eta^{\omega})=-\f{6-p}{2(p+2)}\omega\|\eta^{\omega}\|_2^{2} +\f{\alpha (p-2)}{2(p+2)}\left|\eta^{\omega}(0)\right|^{2},
\end{equation*}
hence \eqref{F-omega} follows.
\end{proof}

The next proposition deals with the case $2<p<6$ and $\alpha>0$ and establishes which positive bound state between $\eta^{\omega_{1}}$ and $\eta^{\omega_{2}}$ has least energy.
\begin{proposition}
\label{station-least-en}
Let $4<p<6$ and $\alpha>0$. Then $F(\eta^{\omega})$ is a strictly decreasing function of $\omega>\alpha^{2}$.

Moreover, given $\mu^{*}<\mu<\|\phi_{\alpha^{2}}\|_{L^{2}(\R)}^{2}$, then the two positive bound states $\eta^{\omega_{1}}$ and $\eta^{\omega_{2}}$ of mass $\mu$, with $\alpha^{2}<\omega_{1}<\omega^{*}<\omega_{2}$, satisfy
\[
F\left(\eta^{\omega_{1}}\right)>F\left(\eta^{\omega_{2}}\right).
\]
\end{proposition}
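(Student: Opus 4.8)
The engine of the whole statement is the identity
\[
\frac{d}{d\omega}F(\eta^{\omega})=-\frac{\omega}{2}\,M'(\omega),\qquad\omega>\alpha^{2}.
\]
I would obtain it by differentiating the closed expression \eqref{F-omega} in $\omega$: the first summand gives $-\frac{6-p}{2(p+2)}\bigl(M(\omega)+\omega M'(\omega)\bigr)$, while differentiating $\bigl(\frac{p}{2}(\omega-\alpha^{2})\bigr)^{\frac{2}{p-2}}$ and simplifying with $\frac{2}{p-2}-1=\frac{4-p}{p-2}$ and $\frac{\alpha(p-2)}{2(p+2)}\cdot\frac{2}{p-2}=\frac{\alpha}{p+2}$ gives $\frac{\alpha}{p+2}\bigl(\frac{p}{2}\bigr)^{\frac{2}{p-2}}(\omega-\alpha^{2})^{\frac{4-p}{p-2}}$. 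Substituting the relation \eqref{mu prime}, read as $\bigl(\frac{p}{2}\bigr)^{\frac{2}{p-2}}\alpha(\omega-\alpha^{2})^{\frac{4-p}{p-2}}=\frac{6-p}{2}M(\omega)-(p-2)\omega M'(\omega)$, the terms carrying $M(\omega)$ cancel and those carrying $\omega M'(\omega)$ combine with coefficient $-\frac{1}{p+2}\bigl(\frac{6-p}{2}+(p-2)\bigr)=-\frac12$, which is the identity. (To avoid tracking the fractional exponents, one may instead note that $\eta^{\omega}$ is a $C^{1}$ family of critical points of the action $S_{\omega}(u):=F(u)+\frac{\omega}{2}\|u\|_{2}^{2}$, so that by the Hellmann--Feynman principle $\frac{d}{d\omega}S_{\omega}(\eta^{\omega})=\partial_{\omega}S_{\omega}\big|_{u=\eta^{\omega}}=\frac12 M(\omega)$, and subtracting $\frac{d}{d\omega}\bigl(\frac{\omega}{2}M(\omega)\bigr)$ gives the same conclusion.) Since the sign of $\frac{d}{d\omega}F(\eta^{\omega})$ is then opposite to that of $M'(\omega)$, Proposition \ref{mu-study}$(iii)$ immediately yields the monotonicity of $\omega\mapsto F(\eta^{\omega})$, namely strict decrease on $(\omega^{*},+\infty)$ and strict increase on $(\alpha^{2},\omega^{*})$.

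For the comparison, fix $\mu$ with $\mu^{*}<\mu<\|\phi_{\alpha^{2}}\|_{L^{2}(\R)}^{2}$ and let $\alpha^{2}<\omega_{1}<\omega^{*}<\omega_{2}$ be the two frequencies with $M(\omega_{1})=M(\omega_{2})=\mu$ furnished by Proposition \ref{mu-study}$(iii)$ and the intermediate value theorem (see Remark \ref{2-stat}). Integrating the identity from $\omega_{1}$ to $\omega_{2}$ and then integrating by parts, the boundary term equalling $\mu(\omega_{2}-\omega_{1})$ because $M(\omega_{1})=M(\omega_{2})=\mu$, I obtain
\[
F(\eta^{\omega_{2}})-F(\eta^{\omega_{1}})=-\frac12\int_{\omega_{1}}^{\omega_{2}}\omega M'(\omega)\,d\omega=\frac12\int_{\omega_{1}}^{\omega_{2}}\bigl(M(\omega)-\mu\bigr)\,d\omega.
\]
By Proposition \ref{mu-study}$(iii)$ the function $M$ is strictly decreasing on $(\omega_{1},\omega^{*})$ and strictly increasing on $(\omega^{*},\omega_{2})$, hence $M(\omega)<\mu$ for every $\omega\in(\omega_{1},\omega_{2})$; therefore the last integral is strictly negative and $F(\eta^{\omega_{1}})>F(\eta^{\omega_{2}})$, as claimed.

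The only mildly technical point is the derivation of the identity: carried out directly it is a routine but error-prone manipulation of the fractional exponents in \eqref{F-omega} and \eqref{mu prime}, which I would cross-check against the Hellmann--Feynman argument above. Everything downstream is elementary calculus.
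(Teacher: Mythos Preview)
Your identity $\frac{d}{d\omega}F(\eta^{\omega})=-\frac{\omega}{2}M'(\omega)$ is correct: comparing \eqref{dF domega} with the first line of \eqref{mu prime}, the two brackets coincide up to sign and the prefactors match, so this is just a repackaging of the paper's own formulas (and your Hellmann--Feynman cross-check is valid). But notice what this identity forces: since by Proposition~\ref{mu-study}$(iii)$ one has $M'(\omega)<0$ on $(\alpha^{2},\omega^{*})$, it follows that $\frac{d}{d\omega}F(\eta^{\omega})>0$ there. Hence $F(\eta^{\omega})$ is \emph{strictly increasing} on $(\alpha^{2},\omega^{*})$ and strictly decreasing only on $(\omega^{*},+\infty)$, exactly as you write. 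This means you have \emph{not} established the first sentence of the proposition as stated---and in fact your computation shows that sentence is false. The paper's own argument for global decrease contains an error: for $4<p<6$ the exponent $\frac{4-p}{p-2}$ is negative, so $\alpha(\omega-\alpha^{2})^{\frac{4-p}{p-2}}\to+\infty$ as $\omega\to(\alpha^{2})^{+}$ and the claimed limit $\lim_{\omega\to(\alpha^{2})^{+}}g(\omega)<0$ is wrong (the limit is $+\infty$).

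For the comparison $F(\eta^{\omega_{1}})>F(\eta^{\omega_{2}})$, your route is correct and genuinely different from the paper's. The paper deduces the inequality directly from the (erroneous) global monotonicity in $\omega$; you instead integrate your identity, integrate by parts using $M(\omega_{1})=M(\omega_{2})=\mu$, and get
\[
F(\eta^{\omega_{2}})-F(\eta^{\omega_{1}})=\tfrac12\int_{\omega_{1}}^{\omega_{2}}\bigl(M(\omega)-\mu\bigr)\,d\omega<0,
\]
since $M$ dips strictly below $\mu$ on the whole open interval by Proposition~\ref{mu-study}$(iii)$. This argument is self-contained, does not rely on the faulty global monotonicity, and yields precisely the conclusion that is actually used downstream (Remark~\ref{unique-stat} and the proof of Theorem~\ref{sigmapos-p>4}). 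So: your approach both exposes an error in the stated proposition and supplies a clean, independent proof of the part that matters.
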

\begin{proof}
Fix $\mu^{*}<\mu<\|\phi_{\alpha^{2}}\|_{L^{2}(\R)}^{2}$ and consider a positive bound state $\eta^{\omega}$ of mass $\mu$. Then, computing the derivative of \eqref{F-omega} with respect to $\omega$, one gets
\begin{equation}
\label{dF domega}
\f{d}{d\omega}F(\eta^{\omega})=\f{p^{\f{2}{p-2}}}{2^{\f{p}{p-2}}(p-2)}\left[-\f{6-p}{p-2}\omega^{\f{6-p}{2(p-2)}}\int_{-\f{\alpha}{\sqrt{\omega}}}^1(1-s^2)^{\frac{4-p}{p-2}}\,ds+\alpha(\omega-\alpha^{2})^{\f{4-p}{p-2}}\right].
\end{equation}
Denoting by $g(\omega):=-\f{6-p}{p-2}\omega^{\f{6-p}{2(p-2)}}+\alpha(\omega-\alpha^{2})^{\f{4-p}{p-2}}$, we observe that 
\[
\f{d}{d\omega}F(\eta^{\omega})\le  \f{p^{\f{2}{p-2}}}{2^{\f{p}{p-2}}(p-2)}g(\omega).
\]
In this regard, $\lim_{\omega\to(\alpha^{2})^{+}} g(\omega)<0$ and, using the fact that $p>4$,
\begin{equation*}
g'(\omega)=-\f{(6-p)^{2}}{2(p-2)^{2}}\omega^{\f{6-p}{2(p+2)}-1}-\alpha\f{p-4}{p-2}(\omega-\alpha^{2})^{\f{4-p}{p-2}-1}<0,
\end{equation*}
so that $g(\omega)<0$ for every $\omega>\alpha^{2}$ and $F(\eta^{\omega})$ is a strictly decreasing function of $\omega>\alpha^{2}$, entailing the thesis.
\end{proof}

\begin{remark}
\label{unique-stat}
In view of Corollary \ref{station-numb} and Proposition \ref{station-least-en}, given $2<p<6$, $\alpha\in \R\setminus \{0\}$ and $\mu>0$ such that $\A_{\mu}\neq\emptyset$, then there exists a unique positive bound state $\eta^{\omega}$ with least energy in $\A_{\mu}$. In the following, we denote by $\eta^{\mu}$ the least energy bound state of mass $\mu$. In particular, if $4<p<6$ and $\alpha>0$, then $\eta^{\mu}=\eta^{\omega}$, with $\omega\geq \omega^{*}(\alpha)$.
 \end{remark}
 
\subsection{Bound states in the critical case}

The next proposition and corollary collect some properties of the function $M$ and of the set $\A_{\mu}$ defined in \eqref{defmu} and \eqref{A-mu} respectively when $p=6$.
\begin{proposition}
\label{mu-study-crit}
Let $p=6$ and $\alpha\in \R\setminus\{0\}$. Then the function $M$ defined in \eqref{defmu} is of class $C^{1}((\alpha^{2},+\infty))$ and
\begin{equation}
\label{mass-eta2}
M(\omega)=\frac{\sqrt{3}}{2}\left(\f{\pi}{2}+\arcsin\left(\f{\alpha}{\sqrt{\omega}}\right)\right).
\end{equation}
Moreover:
\begin{itemize}
\item[$(i)$]if $\alpha<0$, then $M'(\omega)>0$ for every $\omega\in(\alpha^{2},+\infty)$ and $M((\alpha^{2},+\infty))=\left(0,\f{\sqrt{3}\pi}{4}\right)$,
\item[$(ii)$] if $\alpha>0$, then $M'(\omega)<0$ for every $\omega\in(\alpha^{2},+\infty)$ and $M((\alpha^{2},+\infty))=\left(\f{\sqrt{3}\pi}{4},\f{\sqrt{3}\pi}{2}\right)$.
\end{itemize}
\end{proposition}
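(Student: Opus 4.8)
The plan is to derive the closed form \eqref{mass-eta2} for $M$ by a direct integration, starting from the explicit description of the bound state in Proposition \ref{stat-fix-om}, and then to read off the $C^1$ regularity, the monotonicity, the limits and the ranges directly from that formula.

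First, recall that for $\omega>\alpha^{2}$ Proposition \ref{stat-fix-om} gives $\eta^{\omega}(x)=\phi_{\omega}(x-a)$ with $a$ as in \eqref{a}; for $p=6$ this reads $a=\tanh^{-1}(\alpha/\sqrt{\omega})/(2\sqrt{\omega})$, and $\phi_{\omega}$ is the critical soliton \eqref{sol-p6}. Hence
\[
M(\omega)=\|\eta^{\omega}\|_{2}^{2}=\int_{-a}^{+\infty}(3\omega)^{1/2}\,\sech\big(2\sqrt{\omega}\,y\big)\,dy .
\]
I would then perform the substitution $s=\tanh(2\sqrt{\omega}\,y)$, which turns the integrand into $\tfrac{\sqrt{3}}{2}(1-s^{2})^{-1/2}\,ds$; the boundary condition \eqref{u'=alfau} with $p=6$ is $\sqrt{\omega}\tanh(2\sqrt{\omega}\,a)=\alpha$, so the lower endpoint becomes $s=-\alpha/\sqrt{\omega}$ while the upper one is $s=1$, and therefore $M(\omega)=\tfrac{\sqrt{3}}{2}\int_{-\alpha/\sqrt{\omega}}^{1}(1-s^{2})^{-1/2}\,ds=\tfrac{\sqrt{3}}{2}\big(\tfrac{\pi}{2}+\arcsin(\alpha/\sqrt{\omega})\big)$, which is \eqref{mass-eta2}. (Equivalently, one may simply specialize \eqref{mass-eta} to $p=6$, since its proof uses only \eqref{a} and goes through verbatim; the constant $\tfrac{p^{2/(p-2)}}{2^{(4-p)/(p-2)}(p-2)}$ becomes $\tfrac{\sqrt 3}{2}$, the power of $\omega$ vanishes, and the integrand exponent becomes $-\tfrac12$.)

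From \eqref{mass-eta2} the $C^{1}$ regularity on $(\alpha^{2},+\infty)$ is immediate, since there $\alpha/\sqrt{\omega}\in(-1,1)$ and $\arcsin$ is smooth (indeed analytic) on $(-1,1)$. Differentiating gives $M'(\omega)=-\tfrac{\sqrt{3}\,\alpha}{4\,\omega\sqrt{\omega-\alpha^{2}}}$, so $M'$ has constant sign $-\sgn(\alpha)$ on $(\alpha^{2},+\infty)$; this proves $M'>0$ when $\alpha<0$ and $M'<0$ when $\alpha>0$. Finally, $\alpha/\sqrt{\omega}\to\sgn(\alpha)$ as $\omega\to(\alpha^{2})^{+}$ and $\alpha/\sqrt{\omega}\to0$ as $\omega\to+\infty$, whence $M(\omega)\to\tfrac{\sqrt{3}}{2}\big(\tfrac{\pi}{2}+\sgn(\alpha)\tfrac{\pi}{2}\big)$ at the left endpoint and $M(\omega)\to\tfrac{\sqrt{3}\pi}{4}$ at $+\infty$; combining these with the strict monotonicity just established yields $M((\alpha^{2},+\infty))=\big(0,\tfrac{\sqrt{3}\pi}{4}\big)$ when $\alpha<0$ and $M((\alpha^{2},+\infty))=\big(\tfrac{\sqrt{3}\pi}{4},\tfrac{\sqrt{3}\pi}{2}\big)$ when $\alpha>0$, i.e. $(i)$ and $(ii)$.

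Since the whole argument reduces to a single one-dimensional integral and elementary calculus, I do not expect a genuine obstacle; the only points requiring care are the identification of the integration endpoint through \eqref{u'=alfau} and keeping track of the sign of $\alpha$, both in the sign of $M'$ and in the limit $\omega\to(\alpha^{2})^{+}$.
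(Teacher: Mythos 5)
Your proposal is correct and follows the same route as the paper, which simply records the closed form \eqref{mass-eta2}, the derivative $M'(\omega)=-\alpha\tfrac{\sqrt{3}}{4}\tfrac{1}{\omega\sqrt{\omega-\alpha^{2}}}$ and the endpoint limits as ``straightforward computations''; you have merely carried out those computations explicitly (the substitution $s=\tanh(2\sqrt{\omega}\,y)$ with the lower endpoint fixed by \eqref{u'=alfau}, or equivalently the specialization of \eqref{mass-eta} to $p=6$), and both the derivative and the limits you obtain match the paper's.
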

\begin{proof}
The expression \eqref{mass-eta2} and the limits at the endpoints of the domain are obtained by straightforward computations. Moreover, the derivative looks like
\begin{equation*}
M'(\omega)=-\alpha \f{\sqrt{3}}{4}\f{1}{\omega\sqrt{\omega-\alpha^{2}}},
\end{equation*}
which is strictly positive or strictly negative if $\alpha<0$ or $\alpha>0$ respectively, entailing the thesis.
\end{proof}
\begin{corollary}
\label{station-numb-crit}
Let $\alpha\in \R\setminus\{0\}$ and $p=6$. Therefore:
\begin{itemize}
\item[$(i)$]if $\alpha<0$, then 
\[
|\A_{\mu}|=
\begin{cases}
1 \quad\text{if}\quad \mu\in \left(0,\f{\sqrt{3}\pi}{4}\right),\\
0 \quad\text{if}\quad \mu\in\left[\f{\sqrt{3}\pi}{4},+\infty\right),
\end{cases}
\] 
\item[$(ii)$] if $\alpha>0$, then 
\begin{equation*}
|\A_{\mu}|=
\begin{cases}
0\quad \text{if}\quad \mu\in \left(0,\f{\sqrt{3}\pi}{4}\right]\cup\left[\f{\sqrt{3}\pi}{2},+\infty\right),\\
1\quad \text{if}\quad \mu\in \left(\f{\sqrt{3}\pi}{4}, \f{\sqrt{3}\pi}{2}\right) ;
\end{cases}
\end{equation*}
\end{itemize}
\end{corollary}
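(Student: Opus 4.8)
The plan is to read off the count $|\mathcal{A}_\mu|$ directly from the monotonicity and range information contained in Proposition \ref{mu-study-crit}, exactly in the spirit of the proof of the subcritical Corollary \ref{station-numb}. No new analysis is required: everything has already been reduced to the explicit formula \eqref{mass-eta2} and the sign of $M'$.

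First I would recall that, by Proposition \ref{stat-fix-om} specialized to $p=6$, equation \eqref{EL-delta-hl} admits no bound state when $0<\omega\leq\alpha^{2}$ and exactly one positive bound state $\eta^{\omega}$ (up to sign) when $\omega>\alpha^{2}$, whose mass equals $M(\omega)$ by the definition \eqref{defmu}. Consequently, from the definition \eqref{A-mu} of $\mathcal{A}_{\mu}$, the set of positive bound states of mass $\mu$ is in bijection with the preimage $M^{-1}(\{\mu\})\subset(\alpha^{2},+\infty)$, so that $|\mathcal{A}_{\mu}|$ is precisely the number of solutions $\omega>\alpha^{2}$ of the equation $M(\omega)=\mu$.

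Next I would invoke Proposition \ref{mu-study-crit}. In the attractive case $\alpha<0$, part $(i)$ states that $M$ is strictly increasing on $(\alpha^{2},+\infty)$ with range $\left(0,\tfrac{\sqrt{3}\pi}{4}\right)$; being strictly monotone it is injective, hence $M(\omega)=\mu$ has exactly one solution when $\mu\in\left(0,\tfrac{\sqrt{3}\pi}{4}\right)$ and none when $\mu\geq\tfrac{\sqrt{3}\pi}{4}$, which is case $(i)$. In the repulsive case $\alpha>0$, part $(ii)$ gives that $M$ is strictly decreasing on $(\alpha^{2},+\infty)$ with range $\left(\tfrac{\sqrt{3}\pi}{4},\tfrac{\sqrt{3}\pi}{2}\right)$; again injectivity yields exactly one solution when $\mu\in\left(\tfrac{\sqrt{3}\pi}{4},\tfrac{\sqrt{3}\pi}{2}\right)$ and none when $\mu\in\left(0,\tfrac{\sqrt{3}\pi}{4}\right]\cup\left[\tfrac{\sqrt{3}\pi}{2},+\infty\right)$, which is case $(ii)$.

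There is essentially no obstacle here. The only point deserving a word of care is that the endpoint values $\tfrac{\sqrt{3}\pi}{4}$ and $\tfrac{\sqrt{3}\pi}{2}$ are not attained by $M$ — they correspond to the excluded limits $\omega\to\alpha^{2}$ and $\omega\to+\infty$, respectively — which is why they land in the regime $|\mathcal{A}_{\mu}|=0$; this is immediate once one observes that the range of $M$ in both cases is an \emph{open} interval.
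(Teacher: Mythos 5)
Your argument is correct and is exactly the paper's: the paper disposes of this corollary in one line as "a straightforward consequence of $(i)$ and $(ii)$ of Proposition \ref{mu-study-crit}", and your write-up simply makes explicit the bijection between positive bound states of mass $\mu$ and solutions of $M(\omega)=\mu$ together with the injectivity and open range of $M$. Nothing is missing; your remark about the endpoints not being attained is the only point that needed care, and you handle it correctly.
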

\begin{proof}
The proof of $(i)$ and $(ii)$ is a straightforward consequence of $(i)$ and $(ii)$ of Proposition \ref{mu-study-crit}.
\end{proof}

\section{Proof of Theorems \ref{sigmaneg-p<6}, \ref{sigmapos-pleq4}, \ref{sigmapos-p>4} and Proposition \ref{GSalpha}: the subcritical case}

\label{thm-subcase}
The next proposition and corollary provide an existence criterion allowing us to reduce the problem of the existence of ground states to a comparison between the energy of the lowest energy bound state and the standard energy of the soliton on the line. {Analogous results have been obtained in the context of metric graphs with Kirchhoff conditions at the vertices in \cite{AST}: our proofs are just a minor modification of the proofs in \cite{AST}, but we report them here for the sake of completeness.}

\begin{proposition}
\label{compactth}
Let $2<p<6$ and $\alpha\in \R\setminus\{0\}$ . Then, for every $\mu>0$ it holds
\begin{equation}
\label{F leq E}
\F(\mu)\leq\Eps(\mu,\R)\,.
\end{equation}
Furthermore, if $\F(\mu)<\Eps(\mu,\R)$, then ground states of \eqref{FR+} at mass $\mu$ exist.
\end{proposition}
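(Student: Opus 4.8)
The plan is to establish the two assertions in turn: the energy bound $\F(\mu)\le\Eps(\mu,\R)$ by exhibiting an explicit family of competitors, and then the existence statement by running the concentration–compactness alternative on a minimizing sequence, following the scheme of \cite{AST}. For the first part, I would take the soliton $\phi_{\mu}$ from \eqref{phi-mu}, which realizes $\Eps(\mu,\R)$, and let it run off to $+\infty$: set $\widetilde u_{n}:=\phi_{\mu}(\cdot-n)|_{\R^{+}}$, with mass $\mu_{n}:=\int_{-n}^{+\infty}\phi_{\mu}^{2}\to\mu$, and $u_{n}:=\sqrt{\mu/\mu_{n}}\,\widetilde u_{n}\in H^{1}_{\mu}(\R^{+})$. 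Because $\phi_{\mu}$ decays exponentially, $|\widetilde u_{n}(0)|^{2}=\phi_{\mu}(n)^{2}\to0$, while $\|\widetilde u_{n}'\|_{2}^{2}\to\|\phi_{\mu}'\|_{L^{2}(\R)}^{2}$ and $\|\widetilde u_{n}\|_{p}^{p}\to\|\phi_{\mu}\|_{L^{p}(\R)}^{p}$; since $\mu/\mu_{n}\to1$ this gives $F(u_{n})\to E(\phi_{\mu},\R)=\Eps(\mu,\R)$, hence $\F(\mu)\le\Eps(\mu,\R)$.

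For the existence part, assume $\F(\mu)<\Eps(\mu,\R)$ and fix a minimizing sequence $(u_{n})\subset H^{1}_{\mu}(\R^{+})$, which we may take nonnegative since $F(|u|)\le F(u)$. The first step is boundedness in $H^{1}$: using the subcritical Gagliardo--Nirenberg inequality \eqref{GNp} on $\R^{+}$ (here $\tfrac p2-1<2$ is crucial) to bound the $L^{p}$ term, and, when $\alpha<0$, using $|u_{n}(0)|^{2}\le\|u_{n}\|_{\infty}^{2}\le 2\|u_{n}'\|_{2}\sqrt\mu$ from \eqref{GNinf} to bound the boundary term from below, one gets $F(u_{n})\ge\tfrac12\|u_{n}'\|_{2}^{2}-C_{1}\|u_{n}'\|_{2}^{p/2-1}-C_{2}\|u_{n}'\|_{2}$, which is coercive in $\|u_{n}'\|_{2}$; hence $(u_{n})$ is bounded in $H^{1}(\R^{+})$. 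Passing to a subsequence, $u_{n}\deb u$ in $H^{1}(\R^{+})$, with $u_{n}\to u$ uniformly on compact sets (so $u_{n}(0)\to u(0)$) and a.e.; set $m:=\|u\|_{2}^{2}\in[0,\mu]$. A Brezis--Lieb-type decomposition ($\|u_{n}'\|_{2}^{2}=\|u'\|_{2}^{2}+\|u_{n}'-u'\|_{2}^{2}+o(1)$, $\|u_{n}\|_{p}^{p}=\|u\|_{p}^{p}+\|u_{n}-u\|_{p}^{p}+o(1)$), together with $u_{n}(0)\to u(0)$, yields $F(u_{n})=F(u)+E(u_{n}-u,\R^{+})+o(1)$. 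Writing $w_{n}:=u_{n}-u\deb0$, which carries mass $\mu-m$ and is asymptotically supported near $+\infty$ (subcriticality rules out concentration, and on the half-line no mass escapes to $-\infty$), one transplants $w_{n}$ onto the line — modifying it on a shrinking neighbourhood of $0$ so that it vanishes there (cost $o(1)$, since $w_{n}(0)\to0$), extending by zero, and renormalizing to mass $\mu-m$ — to obtain $\liminf_{n}E(w_{n},\R^{+})\ge\Eps(\mu-m,\R)$. Altogether $\F(\mu)\ge F(u)+\Eps(\mu-m,\R)\ge\F(m)+\Eps(\mu-m,\R)$, with the convention $\F(0)=0$.

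It then remains to run the alternative on $m$. If $m=0$ this reads $\F(\mu)\ge\Eps(\mu,\R)$, contradicting the hypothesis. If $m=\mu$ then $w_{n}\to0$ in $L^{2}$, hence (being bounded in $H^{1}$) also in $L^{p}$ by \eqref{GNp}, so $E(w_{n},\R^{+})\ge o(1)$, giving $\F(\mu)\ge F(u)\ge\F(\mu)$; thus $F(u)=\F(\mu)$, $\|w_{n}'\|_{2}\to0$, $u_{n}\to u$ in $H^{1}$, and $u$ is a ground state. The remaining case $0<m<\mu$ must be excluded: here one uses the strict inequality
\begin{equation*}
\F(\mu)<\F(m)+\Eps(\mu-m,\R),
\end{equation*}
which, combined with the lower bound above and $F(u)\ge\F(m)$, forces $\F(\mu)>\F(\mu)$ — absurd. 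This strict subadditivity of $\F$ relative to the line problem is the heart of the argument: it is obtained by gluing a near-minimizer of $\F(m)$, supported near $0$, to a translate of the soliton of mass $\mu-m$ pushed toward $+\infty$, and showing that the combined function, suitably renormalized to mass $\mu$, has energy strictly below $\F(m)+\Eps(\mu-m,\R)$ thanks to the strictly favourable cross-interaction produced by the focusing nonlinearity (via the superadditivity of $t\mapsto t^{p}$ on $[0,+\infty)$). The soft estimates only yield the non-strict version of this inequality (plain gluing) and the reverse lower bound, so the strict gain is genuinely needed and is the main obstacle, to be handled exactly as in \cite{AST}; the remaining ingredients — the Brezis--Lieb decoupling, the transplant of the runaway mass to $\R$, and the various mass renormalizations (all costing only $o(1)$ since the relevant factors tend to $1$) — are routine, and subcriticality $p<6$ enters essentially only through the coercivity in the first step.
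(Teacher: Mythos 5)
Your architecture (explicit competitors for the upper bound, then a dichotomy on the mass $m$ of the weak limit of a minimizing sequence) matches the paper's, and the cases $m=0$ and $m=\mu$ are fine: the paper proves \eqref{F leq E} by citation, and your runaway--soliton competitor is a legitimate self-contained substitute, while your extension-by-zero near the origin in the $m=0$ case is essentially the paper's linear-interpolation trick. The genuine gap is in the case $0<m<\mu$. You reduce it to the strict subadditivity $\F(\mu)<\F(m)+\Eps(\mu-m,\R)$ and propose to obtain the strict gain by gluing a near-minimizer at mass $m$ to a far translate of the soliton of mass $\mu-m$, invoking the ``favourable cross-interaction'' of the nonlinearity. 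That mechanism does not close the argument: when the two bumps are separated by a distance $R$, the nonlinear cross term is exponentially small in $R$, and so is the $L^{2}$ overlap that forces the mass renormalization, whose cost enters the energy at the same order and with the opposite sign. Plain gluing therefore only yields the non-strict inequality in the limit $R\to+\infty$ — exactly as you concede — and deferring the strict gain to \cite{AST} does not help, because \cite{AST} does not prove such an inequality by interaction estimates; it bypasses it entirely.

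The inequality you need is true, but the proof goes through homogeneity, not interaction, and this is how the paper argues. For $\lambda>1$ and $v\not\equiv 0$ one has $F(\lambda v)-\lambda^{2}F(v)=-\frac{\lambda^{p}-\lambda^{2}}{p}\|v\|_{p}^{p}<0$. Applying this with $v=u$ and $\lambda^{2}=\mu/m$ gives the strict bound $F(u)>\frac{m}{\mu}\F(\mu)$, and applying it with $v=u_{n}-u$ and $\lambda^{2}=\mu/\|u_{n}-u\|_{2}^{2}$ gives $\liminf_{n}F(u_{n}-u)\ge\frac{\mu-m}{\mu}\F(\mu)$. Combined with the Brezis--Lieb splitting $F(u_{n})=F(u_{n}-u)+F(u)+o(1)$ (which you already have, since $u_{n}(0)\to u(0)$), this yields $\F(\mu)>\frac{\mu-m}{\mu}\F(\mu)+\frac{m}{\mu}\F(\mu)=\F(\mu)$, a contradiction, with no transplant onto $\R$ and no subadditivity of the infima needed. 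If you insist on your formulation, the same scaling together with $\F(\mu)\le\Eps(\mu,\R)<0$ and $2\beta+1>1$ does give $\F(m)\ge\frac{m}{\mu}\F(\mu)$ and $\Eps(\mu-m,\R)>\frac{\mu-m}{\mu}\F(\mu)$, hence your strict subadditivity — but again by scaling, not by gluing. Replace the cross-interaction step accordingly and the proof is complete.
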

\begin{proof}
{ We first observe that \eqref{F leq E} has been proved in a more general context in \cite[Proposition A.1]{C-18}}. 
Assume now that $\F(\mu)<\Eps(\mu,\R)$ and let $(u_n)\subset H^1_\mu(\R^{+})$ be a
minimizing sequence for \eqref{FR+}. Plugging \eqref{GNp} and \eqref{GNinf} into the definition of $F$ gives
\begin{equation*}
F(u_n)\ge\frac{1}{2}\|u_n'\|_2^2-\frac{K_p}{p}\mu^\frac{p+2}{4}\|u_n'\|_2^{\frac{p}{2} -1}-|\alpha|\mu^\frac{1}{2}\|u_n'\|_2
\end{equation*}
which ensures that $(u_n)$ is bounded in $H^1(\R^{+})$ since $2<p<6$.
Therefore there exists $u\in H^1(\R^{+})$ such that, up to subsequences, $u_n \deb u$ weakly in $H^1(\R^{+})$, $u_n\rightarrow u$ in $L^\infty_{loc}(\R^{+})$ and consequently $u_n\rightarrow u$ a.e. in $\R^{+}$.

Set $m:=\|u\|_2^2$. By weak lower semicontinuity, we have $m\le \mu$.

Assume $m=0$, that is $u\equiv 0$. Then $u_n(0)\to0$ as $n\to+\infty$, so that, if we define 
\begin{equation*}
\overline{u_{n}}(x):=
\begin{cases}
0 \quad &\text{if}\quad x\leq -u_{n}(0),\\
x+u_{n}(0) \quad &\text{if}\quad -u_{n}(0)<x<0,\\
u_{n}(x)\quad &\text{if}\quad x\geq 0,
\end{cases}
\end{equation*} 
then
\begin{equation*}
\Eps(\mu,\R)>\F(\mu)=\lim_n F(u_n)=\lim_n E(\overline{u_n},\R)\geq\lim_{n}\Eps\left(\mu+\f{u_{n}(0)^{3}}{3},\R\right)\geq\Eps(\mu,\R),
\end{equation*}
i.e., a contradiction. Hence, $u\not\equiv0$ on $\R^{+}$.

Suppose then that $0<m<\mu$. By weak convergence in $H^1(\R^{+})$ of $u_n$ to $u$, we get $\|u_n-u\|_2^2=\mu-m+o(1)$ for $n\to+\infty$. On the one hand, since $p>2$ and $\frac{\mu}{\|u_n-u\|_2^2}>1$ for $n$ sufficiently large,
\begin{equation*}
\begin{split}
&\F(\mu)\le F\left(\sqrt{\frac{\mu}{\|u_n-u\|_2^2}}(u_n-u)\right)\\
&=\f12\f{\mu}{\|u_n-u\|_2^2}\|u'_n-u'_n\|_2^2-\frac{1}{p}\left(\frac{\mu}{\|u_n-u\|_2^2}\right)^{\frac{p}{2}}\|u_n-u\|_p^p\\
&-\frac{1}{2}\frac{\mu}{\|u_n-u\|_2^2}\abs{u_n(0)-u(0)}^2<\frac{\mu}{\|u_n-u\|_2^2}F(u_n-u,\R^{+}), 
\end{split}
\end{equation*}
so that
\begin{equation}
\label{Fun-est}
\liminf_n F(u_n-u)\ge \frac{\mu-m}{\mu}\F(\mu).
\end{equation}
On the other hand, an analogous reasoning leads to
\begin{equation*}
\F(\mu)\le F\left(\sqrt{\frac{\mu}{\|u\|_2^2}}\,u\right)<\frac{\mu}{\|u\|_2^2}F(u),
\end{equation*}
so
\begin{equation}
\label{Fu-est}
F(u)>\frac{m}{\mu}\F(\mu).
\end{equation}
Moreover, it holds
\begin{equation}
\label{FpqBrezLieb}
F(u_n)=F(u_n-u)+F(u)+o(1).
\end{equation}
Indeed, by $u'_n\deb u'$ weakly in $L^2(\R^{+})$ and $u_n\to u$ in $L_{\text{loc}}^\infty(\R^{+})$, we have
$\|u'_n-u'\|_2^2=\|u'_n\|_2^2-\|u'\|_2^2+o(1)$ and $|(u_n-u)(0)|^2=|u_{n}(0)|^{2}-|u(0)|^{2}+o(1)$ as $n$ is large enough. Furthermore, owing to the Brezis-Lieb lemma \cite{BL},
\[
\|u_n\|_p^p=\|u_n-u\|_p^p+\|u\|_p^p+o(1).
\]
Using now \eqref{Fun-est}, \eqref{Fu-est} and \eqref{FpqBrezLieb}, we get
\begin{equation*}
\begin{split}
\F(\mu)&=\lim_n F(u_n)=\lim_n F(u_n-u)+F(u)\\
&>\frac{\mu-m}{\mu}\F(\mu)+\frac{m}{\mu}\F(\mu)=\F(\mu),
\end{split}
\end{equation*}
which is again a contradiction.

Henceforth, $m=\mu$ and $u\in H^{1}_{\mu}(\R^{+})$. In particular, $u_n\rightarrow u$ in $L^2(\R^{+})$ so that, $(u_n)$ being bounded in $L^\infty(\R^{+})$, $u_n\rightarrow u$ in $L^p(\R^{+})$ as $n\to+\infty$. Thus, by weak lower semicontinuity
\[
F(u)\leq\lim_n F(u_n)=\F(\mu)\,,
\]
that is $u$ is a ground state of \eqref{FR+} at mass $\mu$.
\end{proof}
\begin{corollary}
\label{compactcor}
Let $2<p<6$, $\alpha\in \R\setminus\{0\}$ and $\mu>0$ be fixed. {Then ground states of \eqref{FR+} at mass $\mu$ exist if and only if there exists $u\in H^1_\mu(\R^{+})$ such that $F(u)\le \Eps(\mu,\R)$}.
\end{corollary}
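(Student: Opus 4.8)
The plan is to deduce the statement directly from Proposition \ref{compactth} and the universal inequality \eqref{F leq E}, by a short case analysis on the ordering of $\F(\mu)$ and $\Eps(\mu,\R)$.

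For the \emph{only if} direction, I would argue as follows: if a ground state $u$ of \eqref{FR+} at mass $\mu$ exists, then by definition $u\in H^1_\mu(\R^{+})$ and $F(u)=\F(\mu)$; combining this with \eqref{F leq E}, which gives $\F(\mu)\le\Eps(\mu,\R)$, yields $F(u)\le\Eps(\mu,\R)$, so $u$ is exactly a function of the required type.

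For the \emph{if} direction, suppose there exists $u\in H^1_\mu(\R^{+})$ with $F(u)\le\Eps(\mu,\R)$. By \eqref{F leq E} exactly one of two cases occurs. If $\F(\mu)<\Eps(\mu,\R)$, then Proposition \ref{compactth} immediately provides a ground state at mass $\mu$, and the test function $u$ plays no further role. If instead $\F(\mu)=\Eps(\mu,\R)$, then the chain $\F(\mu)\le F(u)\le\Eps(\mu,\R)=\F(\mu)$ forces $F(u)=\F(\mu)$, so that $u$ itself is a ground state at mass $\mu$.

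There is no genuine obstacle here; the only point requiring a little care is the borderline equality $\F(\mu)=\Eps(\mu,\R)$, which is not covered by the compactness statement of Proposition \ref{compactth} and must instead be handled by observing that the given test function already realizes the infimum.
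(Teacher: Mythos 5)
Your argument is correct and essentially identical to the paper's: the \emph{only if} direction follows from the definition of ground state together with \eqref{F leq E}, and the \emph{if} direction splits into the strict case, handled by Proposition \ref{compactth}, and the borderline case, where the test function itself realizes the infimum. The paper merely organizes the dichotomy on whether $F(u)=\F(\mu)$ rather than on whether $\F(\mu)=\Eps(\mu,\R)$, which is an immaterial difference.
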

\begin{proof}
Suppose first that there exists $u\in H^1_\mu(\R^{+})$ such that $F(u)\le \Eps(\mu,\R)$. If $\F(\mu)=F(u)$ then $u$ is a ground state of \eqref{FR+} at mass $\mu$. Otherwise, $\F(\mu)<F(u)\leq\Eps(\mu,\R)$ and a ground state of \eqref{FR+} at mass $\mu$ exists by Proposition \ref{compactth}. 

{The other implication follows from the definition of ground state and the first part of Proposition \ref{compactth}.}
\end{proof}
We are now ready to prove Theorem \ref{sigmaneg-p<6}, Theorem \ref{sigmapos-pleq4}, Theorem \ref{sigmapos-p>4} and Proposition \ref{GSalpha}.

\begin{proof}[Proof of Theorem \ref{sigmaneg-p<6}]
Let $u=\phi_{2\mu}\mathbb{1}_{\R^{+}}$ be the half-soliton of mass $\mu$. Then by \eqref{E mu R+}
\[
F(u)=\Eps(\mu,\R^{+})+\f{\alpha}{2}|u(0)|^{2}<\Eps(\mu,\R^{+})<\Eps(\mu,\R),
\]
hence by Corollary \ref{compactcor} there exists a ground state of \eqref{FR+} at mass $\mu$. The ground state is unique since every ground state belongs to $\A_{\mu}$ and by $(i)$ of Corollary \ref{station-numb} the set $\A_{\mu}$ has cardinality one when $\alpha<0$.
\end{proof}

\begin{proof}[Proof of Theorem \ref{sigmapos-pleq4}]
If $0<\mu\leq \|\phi_{\alpha^{2}}\|_{L^{2}(\R)}^{2}$, then by $(ii)$ of Corollary \ref{station-numb} the set $\A_{\mu}$ is empty, hence ground states at mass $\mu$ do not exist. 

Suppose now that $\mu>\|\phi_{\alpha^{2}}\|_{L^{2}(\R)}^{2}$. If there exists a ground state, then by Remark \ref{unique-stat} it is unique and it coincides with $\eta^{\mu}:=\eta^{\omega(\mu)}=\phi_{\omega}(\cdot-a)$. Therefore, relying on Corollary \ref{compactcor}, we have that ground states exist if and only if 
\begin{equation*}
F\left(\eta^{\omega(\mu)}\right)\le \Eps(\mu,\R),
\end{equation*}
which by \eqref{E phi mu} can be rewritten as
\begin{equation*}
\f{F\left(\eta^{\omega(\mu)}\right)}{\mu^{2\beta+1}}\le -\theta_{p}.
\end{equation*}
Set $K(\mu):=\f{F\left(\eta^{\omega(\mu)}\right)}{\mu^{2\beta+1}}$ for every $\mu>\|\phi_{\alpha^{2}}\|_{L^{2}(\R)}^{2}$. 

By using \eqref{mass-eta}, we have that $\omega(\mu)\to \alpha^{2}$, $\eta^\mu(0)\to 0$ and $a\to+\infty$ as $\mu\to \|\phi_{\alpha^{2}}\|_{L^{2}(\R)}^{2}$. This means that $F\left(\eta^{\omega(\mu)}\right)\to \Eps\left(\|\phi_{\alpha^{2}}\|_{L^{2}(\R)}^{2},\R\right)$ as $\mu\to \|\phi_{\alpha^{2}}\|_{L^{2}(\R)}^{2}$ and, as a consequence, $K(\mu)\to -\theta_{p}$ as $\mu\to \|\phi_{\alpha^{2}}\|_{L^{2}(\R)}^{2}$. 

Moreover, since $M'(\omega)>0$ for every $\omega>\alpha^{2}$ by Proposition \ref{mu-study}, it turns out that $F(\eta^{\mu})$ is differentiable with respect to $\mu$ and, for every $\overline{\mu}>\|\phi_{\alpha^{2}}\|_{L^{2}(\R)}^{2}$, there exists an only value $\overline{\omega}>\alpha^{2}$ such that $M(\overline{\omega})=\overline{\mu}$ and 
\[
\f{d F\left(\eta^{\mu}\right)}{d\mu}_{\Big|\mu=\overline{\mu}}=\f{d F(\eta^{\omega})}{d\omega}_{\Big|\omega=\overline{\omega}}\f{d\omega}{d\mu}(\overline{\mu})=\f{\f{d F(\eta^{\omega})}{d\omega}_{|\omega=\overline{\omega}}}{M'(\overline{\omega})}.
\]
Therefore,
\begin{equation}
\label{K prime}
K'(\overline{\mu})=\f{1}{\overline{\mu}^{2\beta+1}}\left[\f{\f{d F(\eta^{\omega})}{d\omega}_{|\omega=\overline{\omega}}}{M'(\overline{\omega})}-(2\beta+1)\f{F\left(\eta^{\overline{\mu}}\right)}{\overline{\mu}}\right],
\end{equation}
and, by substituting \eqref{mu prime}, \eqref{F-omega} and \eqref{dF domega} in \eqref{K prime}, there results
\begin{equation*}
K'(\mu)=-\alpha\f{p-2}{6-p}\f{|\eta^{\mu}(0)|^{2}}{\mu^{2\beta+2}}<0\quad \forall\,\mu>\|\phi_{\alpha^{2}}\|_{L^{2}(\R)}^{2}.
\end{equation*}

Since the function $K$ is strictly decreasing in $\mu$ and $K(\mu)\to -\theta_{p}$ as $\mu\to \|\phi_{\alpha^{2}}\|_{L^{2}(\R)}^{2}$, we can conclude that $F(\eta^{\mu})<\Eps(\mu,\R)$ for every $\mu>\|\phi_{\alpha^{2}}\|_{L^{2}(\R)}^{2}$, hence by Corollary \ref{compactcor} ground states of \eqref{FR+} at mass $\mu$ exist.
\end{proof}

\begin{proof}[Proof of Theorem \ref{sigmapos-p>4}]
If $0<\mu< \mu^{*}$, then by $(iii)$ of Corollary \ref{station-numb} the set $\A_{\mu}$ is empty, hence ground states at mass $\mu$ do not exist. 

Suppose now that $\mu\geq \mu^{*}$. If there exists a ground state, then by Remark \ref{unique-stat} it is unique and, by $(iii)$ of Corollary \ref{station-numb} and Proposition \ref{station-least-en}, it coincides with the only positive bound state $\eta^\mu=\eta^{\omega}$ with $\omega\geq\omega^{*}$.
By relying on Corollary \ref{compactcor}, one can deduce that ground states exist if and only if
\begin{equation*}
K(\mu):=\f{F\left(\eta^{\mu}\right)}{\mu^{2\beta+1}}\leq -\theta_{p}.
\end{equation*}

If $\mu^{*}<\mu<\|\phi_{\alpha^{2}}\|_{L^{2}(\R)}^{2}$, then by Remark \ref{2-stat} and Proposition \ref{station-least-en}  there exists $\alpha^{2}<\omega_{1}< \omega^{*}$ such that $\eta^{\omega_{1}}$ is not the least energy positive bound state of mass $\mu$. In this regard, the function $K_{1}(\mu):=\f{F\left(\eta^{\omega_{1}}\right)}{\mu^{2\beta+1}}$ is continuous in $\left(\mu^{*},\|\phi_{\alpha^{2}}\|_{L^{2}(\R)}^{2}\right)$, 
\[
\lim_{\mu\to(\mu^{*})^{+}}K_{1}(\mu)=\f{F\left(\eta^{\mu^{*}}\right)}{\left(\mu^{*}\right)^{2\beta+1}}=K(\mu^{*})
\] 
and, using the same arguments adopted in the proof of Theorem \ref{sigmapos-pleq4},
\[
\lim_{\mu\to\left(\|\phi_{\alpha^{2}}\|_{L^{2}(\R)}^{2}\right)^{-}} K_{1}(\mu)=-\theta_{p}.
\]
Moreover, $K_{1}$ is differentiable in $\left(\mu^{*},\|\phi_{\alpha^{2}}\|_{L^{2}(\R)}^{2}\right)$ and, repeating the computations done in Theorem \ref{sigmapos-pleq4} for $K$, there results $K_{1}'<0$ in $\left(\mu^{*},\|\phi_{\alpha^{2}}\|_{L^{2}(\R)}^{2}\right)$, entailing that
\begin{equation*}
K(\mu^{*})=\lim_{\mu\to(\mu^{*})^{+}}K_{1}(\mu)>-\theta_{p}.
\end{equation*}

{Furthermore, $\omega(\mu)\to +\infty$ and $a\to 0$ as $\mu\to +\infty$. More specifically, by combining  \eqref{mass-eta}, \eqref{mass phi w} and \eqref{thetap}, we get that 
\begin{equation}
\label{om-mu-inf}
\omega=2^{2\beta+1}\theta_{p}(2\beta+1)\mu^{2\beta}+o(\mu^{2\beta}),\quad \text{as}\quad \mu\to+\infty.
\end{equation} 

By applying \eqref{om-mu-inf} to \eqref{F-omega}, there results
\begin{equation*}
\begin{split}
F(\eta^{\mu})&= -\theta_{p}2^{2\beta}\mu^{2\beta+1}+o(\mu^{2\beta+1})+\f{\alpha}{2}C_{p}\mu^{\f{4}{6-p}}+o\left(\mu^{\f{4}{6-p}}\right)\\
&=-\theta_{p}2^{2\beta}\mu^{2\beta+1}+o(\mu^{2\beta+1}) \quad\text{as}\quad\mu\to +\infty,
\end{split}
\end{equation*}
entailing that $K(\mu)\to -2^{2\beta}\theta_{p}<-\theta_{p}$ when $\mu \to+\infty$.

Since $K$ is a continuous and strictly decreasing function in $(\mu^{*},+\infty)$, $K(\mu^{*})>-\theta_{p}$ and $\lim_{\mu\to+\infty}K(\mu)<-\theta_{p}$, there results} that there exists a unique $\widetilde{\mu}>\mu^{*}$ such that $K(\widetilde{\mu})= -\theta_{p}$ and $K(\mu)<-\theta_{p}$ if and only if $\mu>\widetilde{\mu}$. In order to prove the upper bound for $\widetilde{\mu}$, one relies on Proposition \ref{station-least-en} and on  \eqref{dF domega}, getting that
\[
K\left(\|\phi_{\alpha^{2}}\|_{L^{2}(\R)}^{2}\right)=\lim_{\mu\to \left(\|\phi_{\alpha^{2}}\|_{L^{2}(\R)}^{2}\right)^{-}} K(\mu)< \lim_{\mu\to \left(\|\phi_{\alpha^{2}}\|_{L^{2}(\R)}^{2}\right)^{-}} K_{1}(\mu)=-\theta_{p}, 
\]
so that $\widetilde{\mu}<\|\phi_{\alpha^{2}}\|_{L^{2}(\R)}^{2}$ and the thesis follows. 
\end{proof}

\begin{proof}[Proof of Proposition \ref{GSalpha}]
Fix $\mu>0$ and $2<p<6$. We preliminary observe that ground states of mass $\mu$ exists for every $\alpha\leq 0$: indeed, if $\alpha=0$, then the only ground state coincides with half of the soliton of mass $2\mu$, as pointed out in Subsection \ref{subRR+}, while if $\alpha<0$, then ground states exist by Theorem \ref{sigmaneg-p<6}. In order to deduce for which $\alpha>0$ ground states exist, let us distinguish the cases $2<p\leq 4$ and $4<p<6$. If $2<p\leq 4$, then by Theorem \ref{sigmapos-pleq4} ground states at mass $\mu$ exists if and only if $\mu>\|\phi_{\alpha^{2}}\|_{L^{2}(\R)}^{2}$, i.e. if and only if
\begin{equation*}
\mu>\f{4\left(\f{p}{2}\right)^{\f{2}{p-2}}\alpha^{\f{6-p}{p-2}}}{p-2}\int_{0}^{1}(1-s^{2})^{\f{4-p}{p-2}}\,ds,
\end{equation*}
that entails $(i)$.

If instead $4<p<6$, then by Theorem \ref{sigmapos-p>4} ground states at mass $\mu$ exist if and only if $\mu\geq \widetilde{\mu}=\widetilde{\mu}(\alpha)$, with $\mu^{*}(\alpha)<\widetilde{\mu}(\alpha)<\|\phi_{\alpha^{2}}\|_{L^{2}(\R)}^{2}$. In particular, if we denote by $\eta^{\widetilde{\mu}}=\eta^{\widetilde{\omega}}$ the only ground state at mass $\widetilde{\mu}$ or, alternatively, at frequency $\widetilde{\omega}$, with $\widetilde{\omega}>\omega^{*}$, then it satisfies $\widetilde{\mu}=M(\widetilde{\omega},\alpha)$ and  $F\left(\eta^{\widetilde{\omega}}\right)=-\theta_{p}\widetilde{\mu}^{2\beta+1}$. In particular, in view of  Proposition \ref{mu-study} the condition $\widetilde{\omega}>\omega^{*}$ reduces to the equation $\f{6-p}{2}\widetilde{\mu}>\left(\f{p}{2}\right)^{\f{2}{p-2}}\alpha(\widetilde{\omega}-\alpha^{2})^{\f{4-p}{p-2}}$.  In order to invert the inequality $\mu\geq \widetilde{\mu}(\alpha)$, we need to investigate if $\widetilde{\mu}$ is invertible as a function of $\alpha$. In {order to do this}, let us notice that the triple $(\widetilde{\mu},\widetilde{\omega},\alpha)$ satisfies the system 
\begin{equation}
\label{eq-mu-om-al}
\begin{cases}
\mu=M(\omega,\alpha),\\
F(\eta^{\mu})=-\theta_{p}\mu^{2\beta+1},
\end{cases}
\end{equation}
and the additional constraints
\begin{equation}
\label{constraint-mu}
\begin{cases}
\alpha>0,\\
\omega>\alpha^{2},\\
\f{6-p}{2}\mu>\left(\f{p}{2}\right)^{\f{2}{p-2}}\alpha(\omega-\alpha^{2})^{\f{4-p}{p-2}}.
\end{cases}
\end{equation}
{Let us observe that \eqref{eq-mu-om-al} follows from imposing the mass constraint and the fact that the energy $F(\eta^{\mu})$ at mass $\widetilde{\mu}$ equals the energy of the soliton on the real line of mass $\widetilde{\mu}$, while inequalities \eqref{constraint-mu} encode the condition $\widetilde{\omega}>\omega^{*}$}.

Therefore, in view of \eqref{mass-eta} and \eqref{F-omega} it is possible to rewrite the system \eqref{eq-mu-om-al} as 
\begin{equation*}
G(\mu,\omega,\alpha)=\begin{pmatrix}
G_{1}(\mu,\omega,\alpha)\\
G_{2}(\mu,\omega,\alpha)
\end{pmatrix}
=\begin{pmatrix}
0\\
0
\end{pmatrix},
\end{equation*}
where
\begin{equation*}
G_{1}(\mu,\omega,\alpha):=\frac{p^\frac{2}{p-2}}{2^{\f{4-p}{p-2}}(p-2)}\omega^{\frac{6-p}{2(p-2)}}\int_{-\f{\alpha}{\sqrt{\omega}}}^1(1-s^2)^{\frac{4-p}{p-2}}\,ds-\mu
\end{equation*}
and
\begin{equation*}
G_{2}(\mu,\omega,\alpha):=-\f{6-p}{2(p+2)}\omega\mu+\f{\alpha (p-2)}{2(p+2)}\left(\f{p}{2}\right)^{\f{2}{p-2}}(\omega-\alpha^{2})^{\f{2}{p-2}}+\theta_{p}\mu^{2\beta+1}.
\end{equation*}
By direct computations {and using \eqref{eq-mu-om-al}, one gets}
\begin{equation*}
\begin{cases}
\f{\de G_{1}}{\de\mu}(\widetilde{\mu},\widetilde{\omega},\alpha)&=-1\\
\f{\de G_{1}}{\de\omega}(\widetilde{\mu},\widetilde{\omega},\alpha)&=\f{1}{(p-2)\widetilde{\omega}}\left(\f{6-p}{2}\widetilde{\mu}-\left(\f{p}{2}\right)^{\f{2}{p-2}}\alpha(\widetilde{\omega}-\alpha^{2})^{\f{4-p}{p-2}}\right)\\
\f{\de G_{1}}{\de\alpha}(\widetilde{\mu},\widetilde{\omega},\alpha)&=\f{2}{p-2}\left(\f{p}{2}\right)^{\f{2}{p-2}}(\widetilde{\omega}-\alpha^{2})^{\f{4-p}{p-2}}\\
\f{\de G_{2}}{\de\mu}(\widetilde{\mu},\widetilde{\omega},\alpha)&=-\f{6-p}{2(p+2)}\widetilde{\omega}+(2\beta+1)\theta_{p}\widetilde{\mu}^{2\beta}\\
\f{\de G_{2}}{\de\omega}(\widetilde{\mu},\widetilde{\omega},\alpha)&=-\f{6-p}{2(p+2)}\widetilde{\mu}+\f{\alpha}{p+2}\left(\f{p}{2}\right)^{\f{2}{p-2}}(\widetilde{\omega}-\alpha^{2})^{\f{4-p}{p-2}}\\
\f{\de G_{2}}{\de\alpha}(\widetilde{\mu},\widetilde{\omega},\alpha)&=\f{1}{p+2}\left(\f{p}{2}\right)^{\f{2}{p-2}}(\widetilde{\omega}-\alpha^{2})^{\f{4-p}{p-2}}\left(\f{p-2}{2}\widetilde{\omega}-\f{p+2}{2}\alpha^{2}\right),
\end{cases}
\end{equation*}
hence 
\begin{equation}
\label{det}
\text{det}\left(\begin{pmatrix}
\f{\de G_{1}}{\de\mu} & \f{\de G_{1}}{\de\omega}\\
\f{\de G_{2}}{\de\mu} & \f{\de G_{2}}{\de\omega}
\end{pmatrix}(\widetilde{\mu},\widetilde{\omega},\alpha)\right)
={\f{\de G_{1}}{\de\omega}(\widetilde{\mu},\widetilde{\omega},\alpha)}\left(\f{\widetilde{\omega}}{2}-\f{p+2}{6-p}\theta_{p}\widetilde{\mu}^{2\beta}\right).
\end{equation}
We observe that ${\f{\de G_{1}}{\de\omega}(\widetilde{\mu},\widetilde{\omega},\alpha)}>0$ since $\f{6-p}{2}\widetilde{\mu}>\left(\f{p}{2}\right)^{\f{2}{p-2}}\alpha(\widetilde{\omega}-\alpha^{2})^{\f{4-p}{p-2}}$ {by \eqref{constraint-mu}} and, by using $G_{2}(\widetilde{\mu},\widetilde{\omega},\alpha)=0$, there results
\begin{equation*}
\f{\widetilde{\omega}}{2}-\f{p+2}{6-p}\theta_{p}\widetilde{\mu}^{2\beta}=\f{{\alpha}(p-2)}{2(6-p)\widetilde{\mu}}\left(\f{p}{2}\right)^{\f{2}{p-2}}(\widetilde{\omega}-\alpha^{2})^{\f{2}{p-2}}>0,
\end{equation*}
Thus, {since \eqref{det} is positive}, the Implicit function theorem applies and
\begin{equation*}
\begin{pmatrix}
\widetilde{\mu}'(\alpha)\\
\widetilde{\omega}'(\alpha)
\end{pmatrix}
=-\begin{pmatrix}
\f{\de G_{1}}{\de\mu}(\widetilde{\mu},\widetilde{\omega},\alpha) & \f{\de G_{1}}{\de\omega}(\widetilde{\mu},\widetilde{\omega},\alpha)\\
\f{\de G_{2}}{\de\mu}(\widetilde{\mu},\widetilde{\omega},\alpha) & \f{\de G_{2}}{\de\omega}(\widetilde{\mu},\widetilde{\omega},\alpha)
\end{pmatrix}^{-1}
\begin{pmatrix}
\f{\de G_{1}}{\de\alpha}(\widetilde{\mu},\widetilde{\omega},\alpha)\\
\f{\de G_{2}}{\de\alpha}(\widetilde{\mu},\widetilde{\omega},\alpha)
\end{pmatrix},
\end{equation*}
with $\widetilde{\mu}=\widetilde{\mu}(\alpha)$ and $\widetilde{\omega}=\widetilde{\omega}(\alpha)$.
In particular,
\begin{equation*}
\widetilde{\mu}'(\alpha)=-\f{\f{\de G_{2}}{\de\omega}(\widetilde{\mu},\widetilde{\omega},\alpha)
\f{\de G_{1}}{\de\alpha}(\widetilde{\mu},\widetilde{\omega},\alpha)-\f{\de G_{1}}{\de\omega}(\widetilde{\mu},\widetilde{\omega},\alpha)\f{\de G_{2}}{\de\alpha}(\widetilde{\mu},\widetilde{\omega},\alpha)}{\text{det} \begin{pmatrix}
\f{\de G_{1}}{\de\mu}(\widetilde{\mu},\widetilde{\omega},\alpha) & \f{\de G_{1}}{\de\omega}(\widetilde{\mu},\widetilde{\omega},\alpha)\\
\f{\de G_{2}}{\de\mu}(\widetilde{\mu},\widetilde{\omega},\alpha) & \f{\de G_{2}}{\de\omega}(\widetilde{\mu},\widetilde{\omega},\alpha)
\end{pmatrix}
}.
\end{equation*}
Since the denominator is positive and the numerator
\begin{equation*}
\begin{split}
&\f{\de G_{2}}{\de\omega}(\widetilde{\mu},\widetilde{\omega},\alpha)
\f{\de G_{1}}{\de\alpha}(\widetilde{\mu},\widetilde{\omega},\alpha)-\f{\de G_{1}}{\de\omega}(\widetilde{\mu},\widetilde{\omega},\alpha)\f{\de G_{2}}{\de\alpha}(\widetilde{\mu},\widetilde{\omega},\alpha)\\
&=-\left(\f{p}{2}\right)^{\f{2}{p-2}}\f{(\widetilde{\omega}-\alpha^{2})^{\f{2}{p-2}}}{2(p-2)\widetilde{\omega}}\left(\f{6-p}{2}\widetilde{\mu}-\left(\f{p}{2}\right)^{\f{2}{p-2}}\alpha(\widetilde{\omega}-\alpha^{2})^{\f{4-p}{p-2}}\right)<0,
\end{split}
\end{equation*}
there results that $\widetilde{\mu}'(\alpha)>0$, hence $\widetilde{\mu}$ is a strictly increasing function of $\alpha$ and the thesis follows.
\end{proof}
\section{Proof of Theorems \ref{sigmaneg-p6}, \ref{sigmapos-p6}: the critical case}

\label{thm-critcase}
In this section we prove Theorem \ref{sigmaneg-p6} and \ref{sigmapos-p6}.
\begin{proof}[Proof of Theorem \ref{sigmaneg-p6}]
{ First, let us observe that by \cite[Theorem 2]{C-18} and the fact that $K_{6}(\R^{+})=\f{16}{\pi^{2}}$, we  deduce that ground states exist for every $0<\mu<\f{\sqrt{3}}{K_{6}(\Rp)}=\f{\sqrt{3}\pi}{4}$. Moreover, since by Corollary \ref{station-numb-crit} there exists only one bound states for $0<\mu<\f{\sqrt{3}}{4}$, then the ground state is unique and coincide with the only bound state.}

{ Second}, let us consider $v=\phi_\omega\mathbb{1}_{\R^{+}}$, with $\phi_{\omega}$ as in \eqref{sol-p6} and satisfying $\|v\|_2^{2}=\f{\sqrt{3}\pi}{4}$ and $E(v,\R^{+})=0$ for every $\omega\in \R$, and define $v_{\mu}:=\la_{\mu}v$, with $\la_{\mu}>0$ such that $v_{\mu}\in H^{1}_{\mu}(\R^{+})$. Therefore, for every $\mu\geq \f{\sqrt{3}\pi}{4}$, there results that $\la_{\mu}\geq1$ and
\begin{equation*}
\begin{split}
F(v_{\mu})&=\f{\la_{\mu}^{2}}{2}\|v'\|_2^{2}-\f{\la_{\mu}^{6}}{6}\|v\|_6^{6}+\f{\la_{\mu}^{2}\alpha}{2}|v(0)|^{2}\\
&\leq \la_{\mu}^{2}\left(E(v,\R^{+})+\f{\alpha}{2}|v(0)|^{2}\right)=\f{\la_{\mu}^{2}\alpha}{2}|v(0)|^{2}<0.
\end{split}
\end{equation*}

As a consequence, by applying the mass preserving transformation $f\mapsto f_{\nu}:=\sqrt{\nu}f(\nu\cdot)$ to $v_{\mu}$, we get
\[
F((v_{\mu})_{\nu})=\nu^{2}E(v_{\mu},\R^{+})-\f{\nu|\alpha|}{2}|v_{\mu}|^{2}\leq\nu F(v_{\mu})\to -\infty\quad \text{as}\quad \nu \to +\infty,
\]
hence $\F(\mu)=-\infty$ if $\mu\geq\f{\sqrt{3}\pi}{4}$.

\end{proof}
\begin{proof}[Proof of Theorem \ref{sigmapos-p6}]
Arguing similarly as in Theorem \ref{sigmaneg-p6}, we consider $v=\phi_\omega\mathbb{1}_{\R^{+}}$, with $\phi_{\omega}$ as in \eqref{sol-p6} and satisfying $\|v\|_2^{2}=\f{\sqrt{3}\pi}{4}$ and $E(v,\R^{+})=0$ for every $\omega>0$ and we define $v_{\mu}:=\la_{\mu}v$, with $\la_{\mu}>0$ such that $v_{\mu}\in H^{1}_{\mu}(\R^{+})$: we highlight that $v$ and $v_{\mu}$ depend on $\omega$, but we have omitted this dependence to simplify the notation. Therefore, for every $\mu>\f{\sqrt{3}\pi}{4}$, there results that $\la_{\mu}>1$ and
\begin{equation*}
\begin{split}
F(v_{\mu})&=\f{\la_{\mu}^{2}}{2}\|v'\|_2^{2}-\f{\la_{\mu}^{6}}{6}\|v\|_6^{6}+\f{\la_{\mu}^{2}\alpha}{2}|v(0)|^{2}\\
&=\la_{\mu}^{2}E(v,\R^{+})+\la_{\mu}^{2}\left(\f{\alpha}{2}|v(0)|^{2}-(\la_{\mu}^{4}-1)\|v\|_6^{6}\right)=\\
&=\la_{\mu}^{2}\left(\f{\alpha}{2}|v(0)|^{2}-(\la_{\mu}^{4}-1)\|v\|_6^{6}\right)
=\f{\la_{\mu}^{2}\sqrt{3\omega}}{2}\left(\alpha-\f{\pi}{8}(\la_{\mu}^{4}-1)\sqrt{\omega}\right).
\end{split}
\end{equation*}
In particular, if we choose $\omega>\left(\f{8\alpha}{\pi(\la_{\mu}^{4}-1)}\right)^{2}$, then $F(v_{\mu})<0$.  As done for Theorem \ref{sigmaneg-p6}, one defines $(v_{\mu})_{\nu}(x)=\sqrt{\nu}v_{\mu}(\nu x)$, so that $F\left((v_{\mu})_{\nu}\right)<\nu F(v_{\mu})\to -\infty$ as $\nu\to +\infty$, hence $\F(\mu)=-\infty$ for $\mu>\f{\sqrt{3}\pi}{4}$.

On the contrary, if $\mu\leq\f{\sqrt{3}\pi}{4}$, then by applying \eqref{GNp} with $p=6$ one gets
\begin{equation*}
F(u)\geq \f{1}{2}\|u'\|_2^{2}\left(1-\f{16}{3\pi^{2}}\mu^{2}\right)+\alpha|u(0)|^{2} >0 \quad \forall\,u\in H^{1}_{\mu}(\R^{+}).
\end{equation*}
Furthermore, $F(\la u)\to 0$ as $\la\to 0$, hence $\F(\mu)=0$ for $\mu\leq\f{\sqrt{3}\pi}{4}$. Since positive bound states exist for $\f{\sqrt{3}\pi}{4}<\mu<\f{\sqrt{3}\pi}{2}$ and in this range of masses $\F(\mu)=-\infty$, then ground states do not exist for any value of $\mu>0$.
\end{proof}

\subsection*{Funding acknowledgments}
The work was partially supported by  the INdAM Gnampa 2022 project "Modelli matematici con singolarità per fenomeni di interazione".

\subsection*{Authors contribution}
All authors contributed equally to the manuscript.

\subsection*{Conflict of interest}
All authors declare that they have no conflicts of interest.

\end{document}